\title[ ]{Anderson localization for one-frequency quasi-periodic block operators with long-range interactions }
\thanks{Project supported by the National Natural Science Foundation of China (No.11421061).}
\author{Wenwen Jian}
\address[W. Jian]{School of Mathematical Sciences,
Fudan University,
Shanghai 200433,
P. R. China} \email{wwjian16@fudan.edu.cn}
\author{Yunfeng Shi}
\address[Y. Shi]{School of Mathematical Sciences,
Fudan University,
Shanghai 200433,
P. R. China} \email{yunfengshi13@fudan.edu.cn}
\address[Y. Shi]{Current address: School of Mathematical Sciences,
Peking University,
Beijing 100871,
P. R. China}
 \email{yunfengshi@math.pku.edu.cn}
\author{Xiaoping Yuan}
\address[X. Yuan]{School of Mathematical Sciences,
Fudan University,
Shanghai 200433,
P. R. China} \email{xpyuan@fudan.edu.cn}
\keywords{Anderson localization, quasi-periodic block operators, long-range interactions.}
\theoremstyle{plain}
\newtheorem{thm}{Theorem}[section]
 \newtheorem{lem}[thm]{Lemma}
 \newtheorem{prop}[thm]{Proposition}
 \theoremstyle{definition}
 \newtheorem{defn}[thm]{Definition}
 \theoremstyle{remark}
 \newtheorem{rem}[thm]{Remark}
 \numberwithin{equation}{section}
\begin{document}


\begin{abstract}
In this paper, we study the  quasi-periodic operators $H_{\epsilon,\omega}(x)$: $$(H_{\epsilon,\omega}(x)\vec{\psi})_n=\epsilon\sum_{k\in\mathbb{Z}}W_k\vec{\psi}_{n-k}+V(x+n\omega)\vec{\psi}_n,$$
where
$$\vec{\psi}=\{\vec{\psi}_n\}\in\ell^2(\mathbb{Z},\mathbb{C}^l),\ V(x)=\text{diag}\left(v_1(x),\cdots,v_l(x)\right)$$
with $v_i$ ($1\leq i \leq l$) being  real analytic functions on $\mathbb{T}=\mathbb{R}/\mathbb{Z}$
and $W_k$ ($k\in\mathbb{Z}$) being $l\times l$ matrices satisfying $\|W_k\|\leq C_0e^{-\rho|k|}$.
Using techniques developed by Bourgain and  Goldstein [\textit{{Ann. of Math.  152(3):835--879, 2000}}], we show that for  $|\epsilon|\leq \epsilon_{0}(V,\rho,l,C_0)$ ( depending only on $V,\rho, l, C_0$) and $x\in \mathbb{R}/\mathbb{Z}$,  there is some  full Lebesgue measure subset $\mathcal{F}$ of the Diophantine frequencies such that $H_{\epsilon,\omega}(x)$ exhibits Anderson localization if $\omega\in \mathcal{F}$.
\end{abstract}

\maketitle
\section{Introduction and main result}
Quasi-periodic operators  have been widely studied  in both physics and mathematics literatures, and one of the most famous and typical operators of such type may be the almost Mathieu operator (AMO for short):
\begin{equation*}\label{amo}
(H_{\lambda,\omega,x}u)_n=u_{n+1}+u_{n-1}+2\lambda\cos2\pi(x+n\omega)u_n,
\end{equation*}
where $u=\{u_n\}\in \ell^2(\mathbb{Z},\mathbb{C}),x\in\mathbb{T}$ and $\omega\in\mathbb{R}\setminus\mathbb{Q}$.
In recent years, more and more  research efforts have focused on  the nature of the spectrum and the behaviour of the eigenfunctions, particularly on phenomenon of Anderson localization (AL for short) which means the operator has pure point spectrum with exponentially decaying eigenfunctions. The methods for establishing the AL for a quasi-periodic operator  include mainly the perturbative one and the non-perturbative one. The KAM technique  is a typical perturbative method, which relies heavily on intricate multi-step procedures, eigenvalue (eigenfunction) parametrization, and perturbation arguments \cite{FSWC,SJ,DSF,EA,EC,ST}. Thus the perturbation  may depend on the Diophantine condition. However, the non-perturbative method treats the Green's function directly and only finite scales are involved. As a result, in many cases,  the smallness (largeness) of the perturbation is independent of the Diophantine condition (this is called a non-perturbative AL). For an elegant and more complete exposition of (non) perturbative results (methods), we refer the reader to \cite{JI} by Jitomirskaya.
Let us give a more exact introduction of the non-perturbative AL results.
In 1999, Jitomirskaya \cite{JA} showed that the AMO $H_{\lambda,\omega,x}$ exhibits AL for almost every  $x\in\mathbb{T}$ if $\omega$ is a Diophantine frequency  and $|\lambda|>1$, here $\omega$ is a Diophantine frequency means there is $t>0$ such that $\omega\in \text{DC}_t$ with
$$\mathrm{DC}_t
=\left\{\omega\in\mathbb{R}: ||k\omega||_{\mathbb{T}}\geq \frac{t}{|k|^2},\  \forall k\in\mathbb{Z}\setminus\{0\}\right\}\footnote{Where $ ||x||_{\mathbb{T}}:=\min\limits_{k\in\mathbb{Z}}|x-k|$. It is well-known that the Lebesgue measure of $\mathrm{DC}_t$ is $1-\mathcal{O}(t)$. Our definition here is a little different from that in \cite{JA,BGA}. However, it is not essential.}.$$
Subsequently,  Bourgain and Goldstein \cite{BGA} proved that for a non-constant real analytic potential $v$ on $\mathbb{T}$, the general one-frequency quasi-periodic Schr\"odinger operators which are given by
 $$(H_{\lambda,\omega,x}u)_n=u_{n+1}+u_{n-1}+\lambda v(x+n\omega)u_n,$$
satisfy  AL  with  $\omega$ being in a full Lebesgue measure subset of $\mathrm{DC}_t$ and $|\lambda|\geq \lambda_0(v)\gg1$ (independent of $\mathrm{DC}_t$ ). In Chapter 11 of Bourgain's monograph \cite{BB}, he extended their  result  of \cite{BGA} to long-range operators (actually a sketch of the proof):
\begin{equation*}\label{long}
(H_{\epsilon,\omega,x}u)_n=\epsilon\sum_{k\in\mathbb{Z}}w_ku_{n-k}+v(x+n\omega)u_n,
\end{equation*}
where $\{w_k\}_{k\in\mathbb{Z}}$ are the Fourier coefficients of some real analytic function $w$ on $\mathbb{T}$ and $|\epsilon|\leq \epsilon_0(w,v)\ll1$.
We then turn to the block operators case. In \cite{BJG}, Bourgain and Jitomirskaya  extended the result of \cite{BGA} to the band Schr\"odinger operators:
\begin{equation*}\label{band}
 (H_{\lambda,\omega}(x)\vec{\psi})_n:=\vec{\psi}_{n-1}+\vec{\psi}_{n+1}+(\lambda V(x+n\omega)+W_0)\vec{\psi}_n,
\end{equation*}
 with
  $$W_{0}= \left(
           \begin{array}{cccc}
             \ddots& \ddots & 0&\ddots \\
             1& 0 &1 &0 \\
             0 & 1  & 0&1 \\
             \ddots&0& \ddots & \ddots\\
           \end{array}
         \right)_{l\times l}$$
 and $V(x)=\text{diag}\left(v_1(x),\cdots,v_l(x)\right),\vec{\psi}=\{\vec{\psi}_n\}\in\ell^{2}(\mathbb{Z},\mathbb{C}^l)$. In a recent paper  by Klein \cite{KJF}, he studied the  quasi-periodic block Jacobi operators:
 \begin{equation*}\label{jacobi}
 (H_{\lambda,\omega}(x)\vec{\psi})_n:=-(\triangle_W(x)\vec{\psi})_n +\lambda V(x+n\omega)\vec{\psi}_n
 \end{equation*}
 with the ``weighted" Laplacian
 $$(\triangle_W(x)\vec{\psi})_n:=W(x+(n+1)\omega)\vec{\psi}_{n+1}+W^{\top}(x+n\omega)\vec{\psi}_{n-1}+R(x+n\omega)\vec{\psi}_{n}.$$
 Klein proved a non-perturbative AL and generalized the result of \cite{BJG}. For recent AL results,  we  refer the reader to  \cite{B2007,BGS2002,GSV,GS2008,GS2011,AJA,AYZD,JLA}.

In this paper, we study the one-frequency quasi-periodic block operators with long-range interactions:
\begin{equation}\label{op}
(H_{\epsilon,\omega}(x)\vec{\psi})_n:=\epsilon\sum_{k\in\mathbb{Z}}W_k\vec{\psi}_{n-k}+V(x+n\omega)\vec{\psi}_n,
\end{equation}
where
$$\vec{\psi}=\{\vec{\psi}_n\}\in\ell^2(\mathbb{Z},\mathbb{C}^l),\ V(x)=\text{diag}\left(v_1(x),\cdots,v_l(x)\right)$$
with $v_i$ ($1\leq i \leq l$) being  real analytic functions on $\mathbb{T}$
and $W_k$ ($k\in\mathbb{Z}$) being exponential decay $l \times l$ matrices satisfying $W_{-k}=W_k^*$ ($W_k^*$ denotes the complex conjugate of $W_k$).
In general, we call $x\in \mathbb{T}$ the phase, $\omega\in\mathbb{R}\setminus\mathbb{Q}$ the frequency, $\epsilon\in\mathbb{R}$ the perturbation and $V$ the potential. It is well-known that  every $H_{\epsilon,\omega}(x)$  is a bounded self-adjoint operator on the Hilbert space $\ell^2(\mathbb{Z},\mathbb{C}^l)$.
This kind of operators was studied in some papers before, such as in \cite{GSC,GC}.

The purpose of the present work is to show the operator $H_{\epsilon, \omega}(x)$ defined in (\ref{op}) exhibits non-perturbative AL. This generalizes a result of  Bourgain \cite{BB} as well as a result of  Klein \cite{KJF}.
More precisely, we have
\begin{thm}\label{mthm}
Let $H_{\epsilon, \omega}(x)$ be given by (\ref{op}) with $||W_k||\leq C_0 e^{-\rho|k|}$  and $v_i$ ($1\leq i\leq l$) be nonconstant real analytic functions on $\mathbb{T}$, where the norm $\|\cdot\|$ is the standard matrix norm. Then there exists $\epsilon_0=\epsilon_0(V,\rho,l,C_0)>0$ (depending only on $V,\rho,l,C_0$) such that for $|\epsilon|\leq |\epsilon_0|$, $x\in\mathbb{T}$, there is some zero Lebesgue measure set $\mathcal{R}$ so that for $\omega\in \mathrm{DC}_t\setminus\mathcal{R},$  $H_{\epsilon,\omega}(x)$ shows the Anderson localization.
\end{thm}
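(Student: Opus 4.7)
The plan is to implement the non-perturbative scheme of Bourgain--Goldstein \cite{BGA}, as extended to long-range operators in Chapter 11 of \cite{BB}, in the present block setting. For a scale $N\ge 1$, let $H_N(x)$ denote the compression of $H_{\epsilon,\omega}(x)$ to the box $\Lambda_N:=[1,N]\cap\mathbb{Z}$, an $lN\times lN$ Hermitian matrix, and set
\[
f_N(x,E):=\det\bigl(H_N(x)-E\bigr),\qquad G_N(x,E):=\bigl(H_N(x)-E\bigr)^{-1}.
\]
The argument reduces to three quantitative ingredients: (i) a large-deviation theorem (LDT) for $\tfrac{1}{N}\log|f_N(x,E)|$; (ii) Cramer-type bounds combined with exponential off-diagonal decay for entries of $G_N$; and (iii) an elimination of frequencies via semi-algebraic geometry.

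For (i) I would first analyse the unperturbed case $\epsilon=0$, where $H_{0,\omega}(x)$ is block-diagonal and
\[
f_N^{0}(x,E)=\prod_{n=1}^{N}\prod_{i=1}^{l}\bigl(v_i(x+n\omega)-E\bigr).
\]
Unique ergodicity of $x\mapsto x+\omega$ together with integrability of $\log|v_i-E|$ (each $v_i$ is nonconstant real analytic, so the singularities are only logarithmic) yield $\tfrac{1}{N}\log|f_N^{0}(x,E)|\to L^{0}(E):=\sum_{i=1}^{l}\int_{\mathbb{T}}\log|v_i(y)-E|\,dy>-\infty$ pointwise and in $L^1$. For general small $|\epsilon|$, the map $x\mapsto \log|f_N(x,E)|$ extends subharmonically to a complex strip $|\mathrm{Im}\,z|<\rho_0$; its upper bound depends only on $V,\rho,l,C_0$, while its $L^1$ mean stays close to $L^{0}(E)$. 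Riesz representation combined with the Cartan/BMO estimates of \cite{BGA} then produces, for every $\omega\in\mathrm{DC}_t$, an LDT of the form
\[
\meas\Bigl\{x\in\mathbb{T}:\; \tfrac{1}{N}\log|f_N(x,E)|<L^{0}(E)-\delta\Bigr\}<e^{-cN^{\sigma}}
\]
for suitable $\delta,\sigma,c>0$ and all $N\ge N_0(V,\rho,l,C_0)$.

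Step (ii) follows from Cramer's rule: $\|(G_N(x,E))_{n,m}\|\le \|f_{N,\widehat n\widehat m}(x,E)\|/|f_N(x,E)|$, where $f_{N,\widehat n\widehat m}$ is a minor of comparable size governed by the same LDT upper bound; this gives $\|G_N(x,E)\|\le e^{N^{1-\tau}}$ off an exceptional set of exponentially small measure. The long-range/block feature enters in the off-diagonal decay: iterating the resolvent identity against the explicit block-diagonal Green's function at $\epsilon=0$ and using $\|W_k\|\le C_0 e^{-\rho|k|}$, or equivalently a Combes--Thomas/Schur-complement argument adapted to blocks, produces a bound $\|(G_N(x,E))_{n,m}\|\le e^{-\gamma|n-m|}$ with $\gamma=\gamma(\rho,V)>0$ whenever $x$ is ``good''.

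For (iii), given a polynomially bounded generalized eigenfunction $\vec\psi$ of $H_{\epsilon,\omega}(x)$ with eigenvalue $E$, the goal is to exclude the occurrence of two disjoint ``bad'' boxes of scale $N$ inside $[-N^{C},N^{C}]$. Following \cite{BGA}, the bad set in $x$ defined via (i) is contained in a semi-algebraic set of degree polynomial in $N$ and $l$; translating along the orbit $\{x+n\omega\}$ produces a semi-algebraic set of $\omega$ whose measure is summable in $N$, and Borel--Cantelli removes a zero-measure $\mathcal{R}\subset\mathrm{DC}_t$. On $\mathrm{DC}_t\setminus\mathcal{R}$ the Poisson-type formula expressing $\vec\psi_n$ as a sum over the boundary of a good box forces exponential decay of $\vec\psi$, and a Schnol-type argument then upgrades this to pure point spectrum with exponentially decaying eigenfunctions, i.e.\ Anderson localization. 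The principal obstacle is the combination of block structure ($l>1$) with the absence of any cocycle/transfer-matrix formalism in the long-range regime: one must work throughout with $lN\times lN$ determinants, carefully tracking the $l$-dependence in the subharmonic upper bounds, the semi-algebraic degrees and the off-diagonal rate, and the threshold $\epsilon_0(V,\rho,l,C_0)$ must be calibrated so that these perturbative losses stay safely below the leading linear-in-$N$ contribution $L^{0}(E)N$ from the unperturbed determinant.
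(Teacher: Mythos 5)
Your overall scheme matches the paper: an LDT for $\tfrac1N\log|\det(H_N-E)|$, Cramer's rule for the Green's function, semi-algebraic elimination of frequencies along the orbit, and Schnol plus Borel--Cantelli to finish. Steps (i) and (iii) are, at the level of detail given, in the right spirit (the paper realizes (i) via the complexification/harmonic-measure argument of \S4 together with the Duarte--Klein quantitative Birkhoff theorem, rather than Cartan/Riesz as you write, but those are interchangeable devices). The genuine gap is in your step (ii), the off-diagonal decay of $G_N$, and it is precisely where the long-range interaction must be confronted.

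You propose to get $\|(G_N)_{n,m}\|\le e^{-\gamma|n-m|}$ on the good set by Combes--Thomas / Schur complement / resolvent iteration against the $\epsilon=0$ Green's function. None of these will deliver a decay rate $\gamma$ bounded below independently of $N$. Even on a ``good'' phase the operator norm $\|G_N\|$ is only controlled by something of size $e^{cN}$ for a small $c>0$ (equivalently, $\mathrm{dist}(E,\sigma(H_N))\gtrsim e^{-cN}$), so Combes--Thomas conjugation with weight $e^{\gamma n}$ is only stable when the conjugation error $O(\epsilon\gamma)$ is below $\|G_N\|^{-1}$, forcing $\gamma\lesssim e^{-cN}$. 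Iterating the resolvent against $D_N^{-1}=G_N^{\epsilon=0}$ fails for the same reason: $D_N^{-1}$ is not uniformly bounded since $v_j(x+n\omega)$ can be arbitrarily close to $E$.

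What the paper does instead --- and what your outline is missing --- is a \emph{uniform in $x$} upper bound on the $(\alpha,\alpha')$-minors of $H_N(x)-E$, namely Proposition~\ref{upperbound}:
\[
|\mu_{N,(\alpha,\alpha')}(x,E)|\le e^{N(\sum_j\int\log|v_j-E|+l\epsilon^\sigma)}\,e^{-(\rho-\epsilon^\sigma)|n(\alpha)-n(\alpha')|},
\]
valid for \emph{every} $x\in\mathbb T$. The off-diagonal decay of $G_N$ then comes from Cramer's rule, dividing this minor bound by the LDT lower bound on $|\det(H_N-E)|$. The proof of the minor bound is a path expansion (Lemma~\ref{mu} of Klein, going back to Bourgain's Chapter~11): one sums over ordered index paths $\Gamma=(\gamma_1,\dots,\gamma_s)$ from $\alpha$ to $\alpha'$, uses Hadamard on the complementary submatrix, uses Lemma~\ref{bous} to exploit that deleting indices from the product cannot make it larger by more than a bounded factor, and then runs a delicate combinatorial count of $(s,b)$-paths (the split into the cases $s\lessgtr\epsilon^{\delta_1}Nl$ and $|n(\alpha)-n(\alpha')|\lessgtr\epsilon^{\delta_1/2}Nl$, with Stirling estimates on $\binom{b(2l+1)}{s-1}$). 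This is the technically heaviest part of the argument, and it is where $\|W_k\|\le C_0e^{-\rho|k|}$ is actually used; your sketch replaces it with an appeal to Combes--Thomas that cannot deliver the claimed rate. A secondary point worth flagging: the minor bound must hold for \emph{all} $x$ because in the paving/elimination step one applies it at unknown shifted phases $x+j_0\omega$ where one only controls $\log|\det|$; an estimate valid merely off an exceptional set would not suffice there.
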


The  proof  of our main theorem employs  techniques developed by  Bourgain and  Goldstein  in \cite{BGA}. We also use some tools in \cite{BB,BJI},  and some convenient notations of Klein in \cite{KJF}.

The main difficulty  here is to establish large deviation theorem (LDT for short) for the restricted Green's function $G_N(x;E)$ (see \S5) and this leads to exploring efficiently upper bounds on  minors of the block matrix $H_N(x)-EI_N$ (see \S 3) as well as a lower bound on $\int_{\mathbb{T}}\log|\text{det}[H_N(x)-EI_N]|\mathrm{d}x$. Since our block operator $H_{\epsilon,\omega}(x)$ (see (\ref{op})) is with a long-range perturbation, it is much more complicated and skillful to obtain such upper and lower bounds.


The structure of the paper is as follows. In \S2, we introduce some notations and basic concepts. In \S 3, we prove uniformly upper bounds on the minors of the Dirichlet matrix. In \S4, we obtain a lower bound on the average of the Dirichelet determinant on torus. The Green's function estimates are established in \S 5. In \S 6, we finish the proof of our main theorem. We include some useful  lemmata  in Appendix A.

\section{Some basic concepts and notations}

\subsection{Some notations}
We use convenient notations introduced by Klein in \cite{KJF}. Let $\mathrm{Mat}_m(\mathbb{C})$ be the set of all $m\times m$ complex matrices. Given a block matrix $M$, we use roman letters for the indices of its block-matrix entries, and Greek letters for the indices of its scalar entries. More precisely, we write $M=(M_{\gamma,\gamma'})_{1\leq \gamma,\gamma'\leq Nl}\in \mathrm{Mat}_{Nl}(\mathbb{C})$ which can be identified with a block matrix $M=(\underline{M}_{n,n'})_{1\leq n,n'\leq N} $ (i.e. $\underline{M}_{n,n'}\in \mathrm{Mat}_l(\mathbb{C})$ for any $1\leq n,n'\leq N$).
Moreover, given $\gamma\in[1,Nl]$, there is a unique $1\leq n(\gamma)\leq N$ such that $\gamma=l\cdot (n(\gamma)-1)+r$ with $1\leq r\leq l$. Thus any scalar $M_{\gamma,\gamma'}$ belongs to the block $\underline{M}_{n(\gamma),n(\gamma')}$.

Given any interval  $[a,b]=\mathcal{N}\subset\mathbb{Z}$ with length $|\mathcal{N}|=b-a+1$ and any infinite $l\times l$-block matrix $M=(\underline{M}_{n,n'})_{n,n'\in\mathbb{Z}}$ (i.e. $\underline{M}_{n,n'}\in \mathrm{Mat}_l(\mathbb{C})$ for all $n,n'\in\mathbb{Z}$), we denote by  $M_{\mathcal{N}}=(\underline{M}_{n,n'})_{n,n'\in\mathcal{N}}$. More generally, one can  define $M_{\mathcal{N}_1,\mathcal{N}_2}$ by restricting  $n\in\mathcal{N}_1,n'\in\mathcal{N}_2$. Especially, we write $M_N=M_{\mathcal{N}}$ if $\mathcal{N}=[1,N]$. Finally, by $I$, we mean the block identity matrix, that is $I=\text{diag}(I_{n})_{n\in\mathbb{Z}}$ with $I_{n}$ being  $l\times l$ identity matrix.

We define for $\rho\geq 0$ the strip $\Delta_\rho=\{z\in\mathbb{C}/\mathbb{Z}: |\Im z|\leq \rho\}$.
For any continuous mapping $f$ from $\Delta_\rho$ to some Banach space $(\mathcal{B},\|\cdot\|)$, we define $\|f\|_\rho=\sup\limits_{z\in\Delta_\rho}\|f(z)\|$.
For any measurable set $\mathcal{A}\subset\mathbb{R}$, we denote by $\mathrm{Leb}(\mathcal{A})$ its Lebesgue measure. If a constant $C$ depends only on functions $v_i(x)$ ($1\leq i\leq l$), we write $C=C(V)$ with $V(x)=\mathrm{diag}(v_1(x),\cdots,v_l(x))$. We also use Euclidean norm for a vector and the standard operator norm for a matrix.

Note that every real analytic function $f$ on $\mathbb{T}$ can be analytically extended to the strip $\Delta_{c(f)}$ with $c(f)>0$ depending only on $f$. Thus without loss of generality,  we assume each $v_i(x)$ ($1\leq i\leq l$) is analytic on $\Delta_\rho$. For simplicity, we also assume the perturbation $\epsilon\geq 0$ and $\|W_k\|\leq e^{-\rho|k|}$ (i.e., $C_0=1$).

\subsection{Harmonic measure} For reader's convenience, we introduce the basic properties of the harmonic measure which will be used in \S 5. The materials in this subsection are from \cite{GMB}.
Write $\mathbb{H}=\{z\in\mathbb{C}:\Im z>0\}$ for the upper half-plane and $\partial \mathbb{H}=\mathbb{R}$ for its boundary.
If $U\subset\mathbb{R}$ is measurable,  the harmonic measure of $U$ at $z=x+iy\in \mathbb{H}$ is
\begin{equation*}
  \mu(z,U,\mathbb{H})=\int_U\frac{y}{(t-x)^2+y^2}\frac{\text{d}t}{\pi}.
\end{equation*}
We note that
\begin{itemize}
\item If $U=(a,b)$, then
\begin{equation}\label{mu1}
  \mu(z,(a,b),\mathbb{H})=\frac{1}{\pi}\arg\left(\frac{z-b}{z-a}\right).
\end{equation}
\item For any bounded Borel function $f$ on $\mathbb{R}$, we have for $x+iy\in\mathbb{H}$
\begin{equation}\label{hmi}
 \int_{\mathbb{R}}f(t)\mathrm{d}\mu(x+iy,t,\mathbb{H})=\int_{\mathbb{R}}f(t+x)\mathrm{d}\mu(iy,t,\mathbb{H}).
\end{equation}

\end{itemize}

Let $\Omega$ be a simply connected domain in the extended plane $\mathbb{C}^{\ast}=\mathbb{C}\cup\{\infty\}$ with its boundary $\partial\Omega$ being a Jordan curve in $\mathbb{C}^{\ast}$.
If $\phi$ is a  conformal mapping from  $\mathbb{H}$ onto $\Omega$, then by Carath\'eodory's theorem, $\phi$ has a continuous extension (again denoted by $\phi$) to $\overline{\Omega}$ and this extension is a continuous bijective mapping from $\overline{\mathbb{H}}$ to $\overline{\Omega}$. Then for any Borel set $U\subset\partial\Omega$, we can define the harmonic measure of $U$ relative to $\Omega$ at $z\in \Omega$ by
\begin{equation}\label{mu3}
    \mu(z,U,\Omega)=\mu(\phi^{-1}(z),\phi^{-1}(U),\mathbb{H}).
\end{equation}
\begin{rem}
This definition is independent of  the choices of conformal mappings.
\end{rem}

\section{Uniformly upper bounds on minors of  the Dirichlet matrix }
In this section, we will prove  uniformly upper bounds on minors of the Dirichlet matrix and we use tools in \cite{BJI} (see also Chapter 11 of \cite{BB}).

Let $\mathcal{N}\subset\mathbb{Z}$ be an interval and $H_{\mathcal{N}}(x)$ be the  restriction of $H_{\epsilon,\omega}(x)$  on $\ell^2(\mathcal{N},\mathbb{C}^l)$. We fix $\epsilon$ and restrict $E$ in a compact interval $\mathcal{A}\subset \mathbb{R}$.
Then $H_N(x,E):=H_N(x)-EI_N$ can  be represented by a $Nl\times Nl$ matrix with complex entries, which we denote by $H_{N,(\alpha,\alpha')}(x,E)$, where $1\leq \alpha,\alpha'\leq Nl$.
 We let $\mu_{N,(\alpha,\alpha')}(x,E)$ be  the $(\alpha,\alpha')$-minor of the Dirichlet matrix $H_N(x,E)$. That is,
$$\mu_{N,(\alpha,\alpha')}(x,E):= \det[(H_N(x,E))_{\neg\alpha,\neg\alpha'}],$$
where for any $1\leq \gamma\leq Nl$, $\neg\gamma=\{1,\cdots,Nl\}\setminus\{\gamma\}$ is arranged in natural order.

The following lemma gives an expression for a minor of a matrix.

\begin{lem}[Lemma 10 in \cite{KJF}]\label{mu}
Let $G=(G_{\gamma,\gamma'})\in\emph{Mat}_{m}(\mathbb{C})$ and $\alpha,\alpha'\in\{1,2,\cdots,m\}$ with $\alpha\neq\alpha'$. Then
\begin{equation*}
  \det[G_{\neg\alpha,\neg\alpha'}]=\sum_{\Gamma}A_{\Gamma}\det [G_{\Gamma^{c}}]\prod_{i=1}^{s-1}G_{\gamma_{i},\gamma_{i+1}},
\end{equation*}
where the sum is taken over all ordered subsets $\Gamma=(\gamma_1,\cdots,\gamma_s)$ of $\{1,2,\cdots,m\}$ with $\gamma_1=\alpha$, $\gamma_s=\alpha'$, and  $A_{\Gamma}\in\{-1,1\}$, $\Gamma^c=\{1,\cdots,m\} \setminus \Gamma$ is arranged in natural order.
\end{lem}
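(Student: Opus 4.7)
This is a purely combinatorial identity, which I would prove by reorganising the Leibniz expansion of the minor according to the cycle decomposition of an auxiliary permutation of $\{1,\ldots,m\}$.

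The starting point is the Leibniz formula for the minor,
\[
\det[G_{\neg\alpha,\neg\alpha'}]\;=\;\sum_{\sigma}\varepsilon(\sigma)\prod_{i}G_{i,\sigma(i)},
\]
where $\sigma$ runs over the bijections between the appropriate row and column index sets, the product runs over the remaining rows, and $\varepsilon(\sigma)\in\{\pm1\}$ records the sign accumulated after relabelling each index set in natural order. Each such $\sigma$ extends to a genuine permutation $\widetilde\sigma\in S_m$ by declaring $\widetilde\sigma(\alpha)=\alpha'$, which forces the cycle of $\widetilde\sigma$ passing through $\alpha$ also to contain $\alpha'$ (since $\alpha\neq\alpha'$); this cycle has the form $\alpha\to\alpha'\to\cdots\to\alpha$.

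From this distinguished cycle I read off the ordered tuple $\Gamma=(\gamma_1,\ldots,\gamma_s)$ with $\gamma_1=\alpha$ and $\gamma_s=\alpha'$, traversed in whichever direction makes the cycle's Leibniz contribution equal to $\prod_{i=1}^{s-1}G_{\gamma_i,\gamma_{i+1}}$. The remaining cycles of $\widetilde\sigma$ constitute a permutation $\tau$ of $\Gamma^c$. Exchanging the summation order---first fixing $\Gamma$, then summing over $\tau\in S(\Gamma^c)$---the inner signed sum is precisely the Leibniz expansion of $\det[G_{\Gamma^c}]$ once $\Gamma^c$ is placed in natural order.

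The three sources of sign---the original $\varepsilon(\sigma)$, the parity of the $s$-cycle on $\Gamma$, and the parity of the permutation that reorders $\Gamma^c$ into natural order---combine into a single factor $A_\Gamma\in\{\pm1\}$. The main obstacle is precisely this sign bookkeeping: one must verify that, after all three contributions are collected, $A_\Gamma$ depends only on $\Gamma$ and not on $\tau$, so that the inner sum genuinely reassembles $\det[G_{\Gamma^c}]$ with a well-defined $\pm$ prefactor. This verification is routine but is the only step where care is required; everything else is a direct reorganisation of Leibniz terms.
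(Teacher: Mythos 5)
The paper does not prove this lemma: it is quoted directly from Klein \cite{KJF}, so there is no in-paper argument to compare with. Your plan---Leibniz expansion of the minor, extending $\sigma$ to a full permutation $\widetilde\sigma\in S_m$ via $\widetilde\sigma(\alpha)=\alpha'$, and splitting $\widetilde\sigma$ into the distinguished cycle through $\alpha$ (giving $\Gamma$) together with the remaining cycles (reassembling $\det[G_{\Gamma^c}]$)---is the standard and essentially correct route, and you rightly identify the sign bookkeeping for $A_\Gamma$ as the only place that demands care.

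There is, however, one step you gloss over that does not go through as asserted. The distinguished cycle of $\widetilde\sigma$ reads $\alpha\to\alpha'\to a_1\to\cdots\to a_k\to\alpha$, and since the row index $\alpha$ is absent from the Leibniz product of the minor, its contribution is $G_{\alpha',a_1}\,G_{a_1,a_2}\cdots G_{a_k,\alpha}$, a directed path of entries from $\alpha'$ to $\alpha$. No choice of ``traversal direction'' for $\Gamma$ converts this into $\prod_{i=1}^{s-1}G_{\gamma_i,\gamma_{i+1}}$ with $\gamma_1=\alpha$, $\gamma_s=\alpha'$: reversing the tuple produces the transposed product $\prod_{i=1}^{s-1}G_{\gamma_{i+1},\gamma_i}$, which differs from the stated one whenever $G$ is not symmetric. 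Already for $m=2$, $\alpha=1$, $\alpha'=2$ the left-hand side is $G_{2,1}$ while the right-hand side as written is $\pm G_{1,2}$. What your argument actually establishes is the version with $\gamma_1=\alpha'$, $\gamma_s=\alpha$ (equivalently, with $\prod G_{\gamma_{i+1},\gamma_i}$, or with the minor $\det[G_{\neg\alpha',\neg\alpha}]$ on the left). This is immaterial for the only use made of the lemma, in Proposition~\ref{upperbound}, since there one needs only $|A_\Gamma|=1$ together with the bound $\|W_{n(\gamma_j)-n(\gamma_{j+1})}\|\le e^{-\rho|n(\gamma_j)-n(\gamma_{j+1})|}$, and $\|W_k\|=\|W_{-k}\|$ makes the direction irrelevant; but your proof should state the identity it actually yields rather than claim a traversal choice reconciles the two orientations, since that claim is false and conceals a transposed-index slip in the statement as transcribed here.
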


\begin{lem}\label{lb}
Assume $b=\sum\limits_{j=1}^{s-1}|n(\gamma_{j+1})-n(\gamma_j)|$. Then $s\leq (b+1)l$.
\end{lem}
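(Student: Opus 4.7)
The proof is essentially a counting argument based on two elementary observations: a bound on how many distinct block indices the sequence $n(\gamma_1),\ldots,n(\gamma_s)$ can hit, and a bound on how many scalar indices sit inside a single block.

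First I would observe that the integer sequence $n(\gamma_1),\ldots,n(\gamma_s)$ describes a walk in $\mathbb{Z}$ whose total variation is exactly $b=\sum_{j=1}^{s-1}|n(\gamma_{j+1})-n(\gamma_j)|$. Consequently $\max_j n(\gamma_j)-\min_j n(\gamma_j)\leq b$, so the set of distinct block indices visited, namely $\mathcal{B}=\{n(\gamma_j):1\leq j\leq s\}$, is contained in an interval of length $b$, hence $\#\mathcal{B}\leq b+1$.

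Next I would use the fact that, by the index convention $\gamma=l(n(\gamma)-1)+r$ with $1\leq r\leq l$, the preimage $n^{-1}(n_0)$ has cardinality exactly $l$ for every $n_0$. Since $\Gamma=(\gamma_1,\ldots,\gamma_s)$ is an \emph{ordered subset} (i.e.\ the $\gamma_j$'s are pairwise distinct), for each fixed $n_0\in\mathcal{B}$ there are at most $l$ indices $j$ with $n(\gamma_j)=n_0$. Partitioning the $s$ indices according to their block label gives
\[
s\;=\;\sum_{n_0\in\mathcal{B}}\#\{j:n(\gamma_j)=n_0\}\;\leq\;l\cdot\#\mathcal{B}\;\leq\;(b+1)l,
\]
which is the claim. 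I do not foresee any genuine obstacle here; the only point that needs a moment of care is the distinctness of the $\gamma_j$'s, which is what turns the $\leq l$ bound into a usable count and is guaranteed by the hypothesis of Lemma \ref{mu} that $\Gamma$ is an ordered subset of $\{1,\ldots,m\}$.
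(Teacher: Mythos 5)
Your proof is correct, and it takes a genuinely different (and arguably cleaner) route than the paper. The paper partitions the walk $n(\gamma_1),\dots,n(\gamma_s)$ into its $m$ maximal runs of constant block index, observes that each run lives in a single $l\times l$ block and hence has length at most $l$ (so $s\le ml$), and then bounds $m$ by counting the $m-1$ nonzero transitions between runs, each contributing at least $1$ to $b$. (As written, the paper asserts that the run-values $n_1,\dots,n_m$ are \emph{pairwise} distinct, which need not be true since a walk can revisit a block after leaving it; but the argument only uses $n_i\neq n_{i+1}$, which holds by definition of a run, so this is harmless.) You instead bound the number of \emph{distinct} blocks $\#\mathcal{B}$ visited, via the elementary estimate $\max_j n(\gamma_j)-\min_j n(\gamma_j)\le b$ (triangle inequality along the walk), giving $\#\mathcal{B}\le b+1$; combined with the fact that each block accounts for at most $l$ of the pairwise-distinct $\gamma_j$, this yields $s\le (b+1)l$ directly. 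Your version is a touch sharper conceptually (since $\#\mathcal{B}\le m$, with strict inequality when blocks are revisited) and sidesteps the need to set up the run decomposition and its slightly overstated distinctness claim; the paper's version keeps the ``path'' picture that is natural when combined with Lemma \ref{mu}. Either works equally well for what is needed in Proposition \ref{upperbound}.
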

\begin{proof}
We let $n(\gamma_1)=n(\gamma_2)=\cdots=n(\gamma_{l_1})=n_1$, $n(\gamma_{l_1+1})=n(\gamma_{l_1+2})=\cdots=n(\gamma_{l_2})=n_2,\cdots, n(\gamma_{l_{m-1}+1})=n(\gamma_{l_{m-1}+2})=\cdots=n(\gamma_{l_m})=n_m$,
where $l_m=s$ and $n_i\neq n_j$ for $i\neq j$.
Since each block has order  $l$, we must have $l_1\leq l,\ l_j-l_{j-1}\leq l$ ($2\le j\leq m$). As a result, $s\leq ml$. Finally,
$$b= \sum\limits_{j=1}^{m-1}|n(\gamma_{l_j})-n(\gamma_{l_j+1})|= \sum\limits_{j=1}^{m-1}|n_j-n_{j+1}|\geq m-1\geq \frac{s}{l}-1.$$
The lemma follows.
\end{proof}

\begin{lem}[Lemma 11.29  in \cite{BB}]\label{bous}
Let $\Gamma=(\gamma_1,\cdots,\gamma_s)$ be an ordered subset of $\{1,2,\cdots,Nl\}$ and $v$ be a non-constant real analytic function on $\mathbb{T}$. Then there is some $\delta=\delta(v)>0$ such that for $0<\epsilon\ll1$, $s>\epsilon^{\delta}Nl$ and $E\in\mathbb{R}$,
\begin{equation*}
 \inf\limits_{x\in\mathbb{T}}\sum_{k\in\Gamma} \log(|v(x+k\omega)-E|+\epsilon)\geq \frac{3}{4}s\log\epsilon.
\end{equation*}
\end{lem}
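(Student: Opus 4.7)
The plan is to prove the inequality by classifying the indices $\gamma \in \Gamma$ according to whether $v(x+\gamma\omega)$ is too close to $E$, and arguing that such ``bad'' indices cannot exceed half of $\Gamma$ uniformly in $x$.

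First I would establish a Lojasiewicz-type sublevel estimate, uniformly in $E$. Since $v$ extends analytically to the strip $\Delta_\rho$ and is non-constant, Jensen's formula applied to $v-E$ on a slightly smaller strip gives a uniform bound $d_0=d_0(v)$ on the number of zeros (with multiplicity) of $v-E$ in $\mathbb{T}$. Consequently,
\[
\mathrm{Leb}\{y\in\mathbb{T}:|v(y)-E|<\eta\}\leq C_v\,\eta^{1/d_0}
\]
uniformly in $E\in\mathbb{R}$, and moreover this sublevel set is a union of at most $K_v$ intervals (bounded by the number of local extrema of $v$).

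Next I would perform a two-level split. Put
\[
B(x):=\{\gamma\in\Gamma:|v(x+\gamma\omega)-E|<\epsilon^{1/2}\}.
\]
One has $\log(|v(x+\gamma\omega)-E|+\epsilon)\geq\tfrac12\log\epsilon$ for $\gamma\notin B(x)$, and the trivial bound $\log(|v(x+\gamma\omega)-E|+\epsilon)\geq\log\epsilon$ for $\gamma\in B(x)$. Since $\log\epsilon<0$, summing yields
\[
\sum_{\gamma\in\Gamma}\log(|v(x+\gamma\omega)-E|+\epsilon)\geq\frac{|B(x)|+s}{2}\log\epsilon,
\]
so the target $\tfrac34 s\log\epsilon$ follows once $|B(x)|\leq s/2$ is established for every $x$. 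To bound $|B(x)|$ I would apply an Erd\H{o}s--Tur\'an--Koksma discrepancy estimate to the Kronecker sequence $\{\gamma\omega\}_{\gamma=1}^{Nl}$: using $\omega\in\mathrm{DC}_t$ together with the interval-union description of $\{y:|v(x+y)-E|<\epsilon^{1/2}\}$, one gets
\[
|B(x)|\leq C_v\,Nl\cdot\epsilon^{1/(2d_0)}+C(v,t)\cdot(Nl)^{1-\tau}
\]
for some $\tau=\tau(t)>0$. Choosing $\delta=\delta(v)>0$ with $\delta<1/(2d_0)$ and invoking $s>\epsilon^\delta Nl$ (so $Nl<s\,\epsilon^{-\delta}$), both terms are bounded by $s/4$ for $\epsilon$ sufficiently small, giving $|B(x)|\leq s/2$ and closing the argument.

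The main obstacle I expect is the uniform-in-$E$ Lojasiewicz step, i.e.\ controlling both the exponent $d_0$ and the interval count $K_v$ as $E$ varies over $\mathbb{R}$. This is where one genuinely needs the analytic extension of $v$ to $\Delta_\rho$: Jensen's formula applied on a slightly smaller strip produces an $E$-independent bound on the zero count of $v-E$, even when $E$ is near a critical value that makes $v-E$ nearly degenerate on a real subinterval.
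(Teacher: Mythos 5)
The paper does not prove this lemma at all; it simply cites Bourgain's Lemma 11.29 from \cite{BB}, so the comparison is against the standard argument rather than a proof written out in this paper. Your strategy --- split $\Gamma$ into the ``bad'' indices $B(x)$ where $|v(x+\gamma\omega)-E|<\epsilon^{1/2}$ and the rest, use a uniform-in-$E$ {\L}ojasiewicz bound (the exponent coming from a Jensen/Duarte--Klein zero-count on the analyticity strip, cf.\ Lemma \ref{mes} and the argument for Lemma \ref{v-e}) on the measure and the interval count of the sublevel set, and then a discrepancy estimate for the Kronecker orbit to bound $|B(x)|$ --- is exactly the right shape, and your arithmetic $\sum \geq \frac{|B(x)|+s}{2}\log\epsilon$ with target $|B(x)|\leq s/2$ is correct.

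There is, however, one genuine gap in the way you close the estimate. You write that after substituting $Nl<s\,\epsilon^{-\delta}$, ``both terms are bounded by $s/4$ for $\epsilon$ sufficiently small.'' That is true for the first term $C_v\,Nl\,\epsilon^{1/(2d_0)}\le C_v\,s\,\epsilon^{1/(2d_0)-\delta}$, since $\delta<1/(2d_0)$. It is \emph{not} true for the discrepancy error $C(v,t)\,(Nl)^{1-\tau}$: substituting the same bound gives $C(v,t)\,s^{1-\tau}\epsilon^{-\delta(1-\tau)}$, and as $\epsilon\to 0$ this factor grows. To make $C(v,t)(Nl)^{1-\tau}\leq s/4$ you need $Nl\geq C'(v,t)\,\epsilon^{-\delta/\tau}$, i.e.\ the scale (equivalently $s$) must be large depending on $\epsilon$ and $t$. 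This requirement is hidden in the lemma's statement but is clearly in force where the lemma is applied (Proposition \ref{upperbound} is proved only for $N\gg 1$ depending on $V,\epsilon,l,\rho,t$); indeed, the inequality fails for $Nl=s=1$ when $E$ lies in the range of $v$, so some largeness of $N$ is unavoidable. So your proof is sound once you add ``$Nl$ large depending on $\epsilon,t,v$'' to the hypotheses rather than relying on $\epsilon$ small alone. As a side remark, one can replace the Erd\H{o}s--Tur\'an--Koksma step by a more elementary separation count: for $\omega\in\mathrm{DC}_t$, two points $x+k\omega,\,x+k'\omega$ in an interval of length $\eta$ force $|k-k'|\geq\sqrt{t/\eta}$, which gives $|B(x)|\leq Nl\sqrt{K_v\epsilon^{\sigma_0/2}/t}+K_v$; this trades a slightly worse power of $\epsilon$ for a milder $N$-largeness requirement ($Nl\gtrsim K_v\epsilon^{-\delta}$), but the $N$-dependence does not disappear either way.
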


We can now state our main result of this section.

\begin{prop}\label{upperbound} Let $\omega\in\mathrm{DC}_t$.
There are $\sigma=\sigma(V)>0$ (depending only on $V$) and $\epsilon_0=\epsilon_0(l,\rho)>0$ (depending only on $l,\rho$) such that if $0<\epsilon\leq \epsilon_0$, then for all $x\in \mathbb{T}$ and $N\gg1$ (depending on $V,\epsilon,l,\rho,t$), we have
\begin{equation*}
   |\mu_{N,(\alpha,\alpha')}(x,E)|\leq e^{N(\sum\limits_{j=1}^l\int_{\mathbb{T}}\log|v_j(x)-E|\mathrm{d}x+l\epsilon^{\sigma})} e^{-(\rho-\epsilon^{\sigma})|n(\alpha)-n(\alpha')|},
\end{equation*}
where $1\leq\alpha,\alpha'\leq Nl$.
\end{prop}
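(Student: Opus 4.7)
The plan is to apply Klein's formula (Lemma~\ref{mu}) with $G=H_N(x,E)$, giving
$$\mu_{N,(\alpha,\alpha')}(x,E)=\sum_{\Gamma}A_\Gamma\,\det\bigl[H_N(x,E)_{\Gamma^c}\bigr]\prod_{i=1}^{s-1}H_N(x,E)_{\gamma_i,\gamma_{i+1}},$$
where $\Gamma=(\gamma_1,\dots,\gamma_s)$ ranges over ordered subsets of $\{1,\dots,Nl\}$ with $\gamma_1=\alpha$, $\gamma_s=\alpha'$. Each summand will be bounded by combining an exponential-decay estimate for the off-diagonal product with a near-diagonal estimate for the cofactor determinant, and the summation over $\Gamma$ will be split on the scalar path length $s$.

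Since the $\gamma_j$'s are distinct, every scalar entry $H_N(x,E)_{\gamma_i,\gamma_{i+1}}$ is a matrix element of $\epsilon W_{n(\gamma_i)-n(\gamma_{i+1})}$, hence bounded by $\epsilon\,e^{-\rho|n(\gamma_i)-n(\gamma_{i+1})|}$. Setting $b:=\sum_{i=1}^{s-1}|n(\gamma_{i+1})-n(\gamma_i)|\ge|n(\alpha)-n(\alpha')|=:d$, this yields
$$\prod_{i=1}^{s-1}|H_N(x,E)_{\gamma_i,\gamma_{i+1}}|\le\epsilon^{s-1}\,e^{-\rho b}.$$
The factor $e^{-\rho d}$ is the source of the desired decay in $|n(\alpha)-n(\alpha')|$; the remaining $\epsilon^{s-1}e^{-\rho(b-d)}$ is reserved to absorb the combinatorial count of paths, whose cardinality is tied to $b$ through Lemma~\ref{lb} (giving $s\le(b+1)l$).

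For the cofactor, decompose $H_N(x,E)=D+\epsilon R$ where $D$ is diagonal with entries $v_{r(\gamma)}(x+n(\gamma)\omega)-E$ and $R$ satisfies $|R_{\gamma,\gamma'}|\le e^{-\rho|n(\gamma)-n(\gamma')|}$. A Hadamard-type estimate together with a perturbative expansion in $\epsilon$ yields
$$\bigl|\det[H_N(x,E)_{\Gamma^c}]\bigr|\le e^{C\epsilon(Nl-s)}\prod_{\gamma\in\Gamma^c}\bigl(|v_{r(\gamma)}(x+n(\gamma)\omega)-E|+\epsilon\bigr).$$
Split the sum on whether $s\le\epsilon^\delta Nl$ or $s>\epsilon^\delta Nl$. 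In the short-path regime, rewrite
$$\prod_{\gamma\in\Gamma^c}\bigl(|v_{r(\gamma)}(x+n(\gamma)\omega)-E|+\epsilon\bigr)=\frac{\prod_{k=1}^N\prod_{j=1}^l\bigl(|v_j(x+k\omega)-E|+\epsilon\bigr)}{\prod_{\gamma\in\Gamma}\bigl(|v_{r(\gamma)}(x+n(\gamma)\omega)-E|+\epsilon\bigr)};$$
the numerator is bounded uniformly in $x$ by $\exp\!\bigl(N\sum_j\!\int_{\mathbb{T}}\log|v_j(y)-E|\,dy+Nl\epsilon^\sigma\bigr)$ via equidistribution of the Diophantine orbit $(x+k\omega)$ applied to the subharmonic functions $\log(|v_j(\cdot)-E|+\epsilon)$, while the denominator is trivially $\ge\epsilon^s$ and contributes only an $l\epsilon^\sigma$-scale error in the short regime. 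In the long-path regime, Lemma~\ref{bous} (applied to each of the $l$ diagonal species) gives $\prod_{\gamma\in\Gamma}(|v_{r(\gamma)}(\cdot)-E|+\epsilon)\ge\epsilon^{3s/4}$ up to a factor $l$, and this combined with the $\epsilon^{s-1}$ from the off-diagonal product renders the contribution negligible.

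The main obstacle is the cofactor determinant bound together with its use in the short-path regime: the diagonal entries of $H_N(x,E)$ can be arbitrarily close to zero, so diagonal dominance is absent pointwise, and one must rely on the $\epsilon$-regularization together with the equidistribution of $(x+k\omega)$ on $\mathbb{T}$ for the subharmonic functions $\log(|v_j(\cdot)-E|+\epsilon)$. Upgrading the scalar argument of Bourgain \cite{BB} to the block setting is where Lemma~\ref{lb} plays its role, converting the scalar combinatorial length $s$ into the block-geometric distance $b$, so that the path-counting factor is absorbed by $e^{-\rho(b-d)}$ and only $e^{-(\rho-\epsilon^\sigma)|n(\alpha)-n(\alpha')|}$ survives.
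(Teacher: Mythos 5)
Your proposal reproduces the paper's proof strategy essentially step for step: Klein's path expansion (Lemma~\ref{mu}), the off-diagonal bound $\epsilon^{s-1}e^{-\rho b}$, the Hadamard-type bound on the cofactor followed by the numerator/denominator split into the full-orbit average (controlled by Denjoy--Koksma and Lemma~\ref{1}) versus the removed $\Gamma$-factors, the dichotomy $s\lessgtr\epsilon^{\delta}Nl$ handled via the trivial bound $\epsilon^{s}$ on one side and Lemma~\ref{bous} on the other, and finally Lemma~\ref{lb} to convert scalar path length $s$ into block distance $b$ so the path count is absorbed by $e^{-\rho(b-|n(\alpha)-n(\alpha')|)}$. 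The only differences are cosmetic (your $e^{C\epsilon(Nl-s)}$ prefactor in place of writing $|v|+C\epsilon$ per factor, and not spelling out the paper's further case split on $|n(\alpha)-n(\alpha')|$ versus $\epsilon^{\delta_1/2}Nl$ and the Stirling estimate for the binomial), so this is the same argument.
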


\begin{proof}
The proof of case $\alpha=\alpha'$ is trivial. Thus in the following, we only consider $\alpha\neq\alpha'$.
From Lemma \ref{mu},
\begin{equation*}
  \mu_{N,(\alpha,\alpha')}(x,E)=\sum_{s}\sum_{\Gamma:|\Gamma|= s}
                \pm\det[(H_N(x,E))_{\Gamma^c}]\epsilon^{s-1}\prod_{j=1}^{s-1}H_N(x,E)_{\gamma_{j},\gamma_{j+1}},
\end{equation*}
where $\Gamma=(\gamma_1,\gamma_2,\cdots,\gamma_s)$ is a path in $[1,Nl]$ with $\gamma_1=\alpha, \gamma_s=\alpha'$.
Recalling the properties of $\{W_k\}$, one has
\begin{align*}
  |\mu_{N,(\alpha,\alpha')}(x,E)|
  \leq &\ \sum_{s}\sum_{\Gamma:|\Gamma|= s} |\det[(H_N(x,E))_{\Gamma^c}]|\epsilon^{s-1}\prod_{j=1}^{s-1}\|W_{n(\gamma_{j})-n(\gamma_{j+1})}\|\\
  \leq &  \sum_{s}\sum_{\Gamma:|\Gamma|= s}
          \epsilon^{s-1}e^{-\rho\sum\limits_{j=1}^{s-1}|n(\gamma_{j+1})-n(\gamma_{j})|} |\det[(H_N(x,E))_{\Gamma^c}]| \\
  \leq & \sum_{b\geq|n(\alpha)-n(\alpha')|}\sum_{s}(2l)^{s-1} \binom{b+s-1}{s-1}\epsilon^{s-1}e^{-\rho b}\\
               & \ \ \times\max_{\Gamma(s,b)}|\det[(H_N(x,E))_{\Gamma^c(s,b)}]|,
\end{align*}
where $b=\sum\limits_{j=1}^{s-1}|n(\gamma_{j+1})-n(\gamma_{j})|$, and a $(s,b)$-path $\Gamma(s,b)$ means the path $\Gamma$ with the restrictions $s=|\Gamma|$ and $b=\sum\limits_{j=1}^{s-1}|n(\gamma_{j+1})-n(\gamma_{j})|$. In the last inequality, we use the fact that  there are at most $(2l)^{s-1} \binom{b+s-1}{s-1}$ many $(s,b)$-paths.

Thus it needs to give the upper bound on $|\det[(H_N(x,E))_{\Gamma^c}]|$ first. By Hadamard's inequality,
\begin{align*}
  |\det[(H_N(x,E))_{\Gamma^{c}}]|
  \leq & \prod_{\beta\in \Gamma^c}\left[|v_{j(\beta)}(x+n(\beta)\omega)-E|+C\epsilon\right],
\end{align*}
where $\beta=(n(\beta)-1)l+j(\beta)$ with  $1\leq\beta\leq Nl $, $0<j(\beta)\leq l$ and $C>0$ is a constant depending only on $\rho$.
Hence
\begin{align}
\nonumber \log |\det[(H_N(x,E))_{\Gamma^{c}}]|= \ & \sum_{j=1}^l\sum_{n=1}^{N}\log\left[|v_{j}(x+n\omega)-E|+C\epsilon\right]\\
  \nonumber & \ - \sum_{\beta\in \Gamma}\log\left[|v_{j(\beta)}(x+n(\beta)\omega)-E|+C\epsilon\right]\\
  \label{logdet} :=\ & (\textrm{I})-(\textrm{II}).
\end{align}
 From Lemmata \ref{1} and \ref{dk}, there is some $\delta=\delta(V)>0$ such that for $0<\epsilon\leq \epsilon_0(\rho)$ and $N\gg1$ (depending on $V,\epsilon,t,\delta$),
\begin{align}\label{I}
 (\textrm{I})
  \leq \ & N\sum\limits_{j=1}^l \int_{\mathbb{T}} \log|v_j(x)-E|\text{d}x + Nl\epsilon^{\delta}.
\end{align}
For (\textrm{II}), using Lemma \ref{bous}, there is some $\delta_1=\delta_1(V)>0$ such that for $s\geq 1$,
\begin{equation}\label{II}
(\textrm{II})>s\log\epsilon,
\end{equation}
and for $s\geq \epsilon^{\delta_1}Nl$,
\begin{align}\label{IIbetter}
  (\textrm{II})\geq\frac{3}{4}s\log\epsilon,
\end{align}
where $s=|\Gamma|$. From (\ref{logdet}), (\ref{I}), (\ref{II}) and (\ref{IIbetter}), we have
\begin{equation}\label{2}
  \log|\det[(H_N(x,E))_{\Gamma^{c}}]|\leq N\sum_{j=1}^l\int_{\mathbb{T}}\log|v_j(x)-E|\text{d}x+Nl\epsilon^{\delta}+s\log\frac{1}{\epsilon},
\end{equation}
and for $s>\epsilon^{\delta_1}Nl$,
\begin{equation}\label{2better}
  \log|\det[(H_N(x,E))_{\Gamma^{c}}]|\leq N\sum_{j=1}^l\int_{\mathbb{T}}\log|v_j(x)-E|\text{d}x+Nl\epsilon^{\delta}+\frac{3}{4}s\log\frac{1}{\epsilon}.
\end{equation}

We then can finish our proof as follows. Let $$d_0=\sum\limits_{j=1}^l\int_{\mathbb{T}}\log|v_j(x)-E|\text{d}x+l\epsilon^{\delta}.$$
Recalling (\ref{2}), (\ref{2better}) and Lemma \ref{lb}, we obtain
\begin{align}\label{mu0}
 \nonumber |\mu_{N,(\alpha,\alpha')}(x,E)|\leq\ & \epsilon^{-1} e^{Nd_0} \sum_{b\geq|n(\alpha)-n(\alpha')|}\sum_{\substack{s\leq (b+1)l\\s\leq \epsilon^{\delta_1}Nl}}(2l)^{s-1} \binom{b+s-1}{s-1}e^{-\rho b}\\
\nonumber  & + \epsilon^{-\frac{3}{4}}  e^{Nd_0}   \sum_{b\geq|n(\alpha)-n(\alpha')|}\sum_{\substack{s\leq (b+1)l\\s>\epsilon^{\delta_1}Nl}}
                   (2l\epsilon^{\frac{1}{4}})^{s-1} \binom{b+s-1}{s-1}e^{-\rho b}\\
        := & \ (\textrm{III})+(\textrm{IV}).
\end{align}
Regarding (IV), we have for $N\gg1$ and $2l(2l+1)\epsilon^{\frac{1}{4}}<\rho $,
\begin{align}\label{4}
 \nonumber (\textrm{IV})\leq &\  \epsilon^{-\frac{3}{4}} e^{Nd_0} \sum_{b\geq|n(\alpha)-n(\alpha')|} e^{-\rho b}
                                  \sum_{\substack{s\leq (b+1)l\\s>\epsilon^{\delta_1}{Nl}}}(2l\epsilon^{\frac{1}{4}})^{s-1} \binom{b(2l+1)}{s-1}\\
 \nonumber \leq & \  \epsilon^{-\frac{3}{4}} e^{Nd_0} \sum_{b\geq|n(\alpha)-n(\alpha')|} e^{-\rho b}
                     (1+2l\epsilon^{\frac{1}{4}})^{b(2l+1)}\\
 \nonumber \leq &\   \epsilon^{-\frac{3}{4}} e^{Nd_0}\sum_{b\geq|n(\alpha)-n(\alpha')|}
                      e^{-\rho b}e^{2l(2l+1)\epsilon^{\frac{1}{4}}b}\\
 \leq& \ C \epsilon^{-\frac{3}{4}} e^{Nd_0}e^{(-\rho+2l(2l+1)\epsilon^{\frac{1}{4}}) |n(\alpha)-n(\alpha')|}.
\end{align}
For $(\textrm{III})$, we distinguish the cases $|n(\alpha)-n(\alpha')|>\epsilon^{\frac{\delta_1}{2}}Nl$ and $|n(\alpha)-n(\alpha')|\leq\epsilon^{\frac{\delta_1}{2}}Nl$.
 If $|n(\alpha)-n(\alpha')|>\epsilon^{\frac{\delta_1}{2}}Nl$, we obtain
\begin{align*}
  (\textrm{III})\leq &\ \epsilon^{-1} (2l)^{\epsilon^{\delta_1}Nl}  e^{Nd_0}
               \sum_{b\geq|n(\alpha)-n(\alpha')|}e^{-\rho b} \sum_{\substack{s\leq (b+1)l\\s\leq\epsilon^{\delta_1}Nl}} \binom{b+s-1}{s-1}\\
  \leq & \  \epsilon^{-1+\delta_1} Nl(2l)^{\epsilon^{{\delta_1}}Nl}  e^{Nd_0}
         \sum_{b\geq|n(\alpha)-n(\alpha')|}e^{-\rho b}
            \binom{b(2l+1)}{\epsilon^{\frac{\delta_1}{2}}b}.
\end{align*}
By Stirling formula, it appears $\binom{m}{rm}\leq Cme^{\phi(r)m}$, where
 \begin{equation*}
 \phi(r)=-r\log r-(1-r)\log(1-r),\  0<r<1.
 \end{equation*}
Therefore, when $N$ is large enough (s.t. $\frac{\log b}{b}<\epsilon$) and $-\rho+(2l+1)\phi((2l+1)^{-1}\epsilon^{\frac{\delta_1}{2}})+\epsilon<0$, we have
\begin{align}\label{3}
  \nonumber (\textrm{III})\leq  &\   C\epsilon^{-1+\delta_1} (2l+1)Nl(2l)^{\epsilon^{{\delta_1}}Nl}  e^{Nd_0}
            \sum_{b\geq|n(\alpha)-n(\alpha')|}be^{-\rho b}e^{(2l+1)\phi((2l+1)^{-1}\epsilon^{\frac{\delta_1}{2}})b}\\
   \leq &\ C\epsilon^{-1+\delta_1} Nl^2(2l)^{\epsilon^{\delta_1}Nl}  e^{Nd_0}
                            e^{(-\rho+\epsilon+(2l+1)\phi((2l+1)^{-1}\epsilon^{\frac{\delta_1}{2}})  )|n(\alpha)-n(\alpha')|}.
\end{align}
\item If $|n(\alpha)-n(\alpha')|\leq \epsilon^{\frac{\delta_1}{2}}Nl$, we write
\begin{align*}
  (\textrm{III})= &\sum_{b>\epsilon^{\frac{\delta_1}{2}}Nl} \sum_{\substack{s\leq (b+1)l\\s\leq\epsilon^{\delta_1}Nl}}\cdots
                     \ +\sum_{\substack{b\geq|n(\alpha)-n(\alpha')|\\b\leq\epsilon^{\frac{\delta_1}{2}}Nl}}
                        \sum_{\substack{s\leq (b+1)l\\s\leq\epsilon^{\delta_1}Nl}}  \cdots\\
                :=&\ (\textrm{III}_1)+(\textrm{III}_2).
\end{align*}
Similarly to the proof of (\ref{3}), one has
\begin{align}
  \nonumber(\textrm{III}_1)\leq &\ \epsilon^{-1+\delta_1} Nl (2l)^{\epsilon^{\delta_1}Nl}  e^{Nd_0}
                           \sum_{b>\epsilon^{\frac{\delta_1}{2}}Nl}e^{-\rho b}
                                \binom{b(2l+1)}{\epsilon^{\frac{\delta_1}{2}}b} \\
  \label{thr1}\leq &\  C\epsilon^{-1+\delta_1} Nl^2(2l)^{\epsilon^{\delta_1}Nl}  e^{Nd_0}
                            e^{(-\rho+\epsilon+(2l+1)\phi((2l+1)^{-1}\epsilon^{\frac{\delta_1}{2}})  )|n(\alpha)-n(\alpha')|}.
\end{align}
Note also that
\begin{align}
 \nonumber (\textrm{III}_2)\leq & \    \epsilon^{-1}  e^{Nd_0}
                           \sum_{\substack{b\geq|n(\alpha)-n(\alpha')|\\b\leq\epsilon^{\frac{\delta_1}{2}}Nl}}e^{-\rho b} (1+2l)^{b(2l+1)}\\
 \label{thr2} \leq &\  C\epsilon^{-1+\frac{\delta_1}{2}}Nl (3l)^{\epsilon^{\frac{\delta_1}{2}}Nl(2l+1)} e^{Nd_0} e^{-\rho |n(\alpha)-n(\alpha')|}.
\end{align}
Putting all above estimates (\ref{mu0})-(\ref{thr2}) together, we obtain that there are $\sigma=\sigma(V)>0,\ \epsilon_0=\epsilon_0(l,\rho)>0$ such that if $0<\epsilon\leq\epsilon_0$, for $N\gg1$ (depending on $V,\epsilon,l,\rho,t$),
$$|\mu_{N,(\alpha,\alpha')}(x,E)|\leq e^{Nd_0+\epsilon^\sigma Nl} e^{-(\rho-\epsilon^{\sigma})|n(\alpha)-n(\alpha')|},$$
which completes the proof.
\end{proof}

\section{A lower bound on average of the Dirichlet determinant on torus}

In this section, we will give a lower bound on the average of the determinant on  torus and the key is to estimate a subharmonic function. We use the complexification idea of   Sorets and Spencer in \cite{SSC}. The  technical tools  employed here are the harmonic measure estimates of Bourgain and Goldstein  in \cite{BGA}  together with the  quantitative Sorets-Spencer result of  Duarte and Klein in \cite{DKC}. We assume $E$ belongs to an interval $\mathcal{A}\subset\mathbb{R}$ with  $\mathrm{Leb}(\mathcal{A})\leq C(V,\rho)$.

We begin with a  lemma (i.e. quantitative Sorets-Spencer result).
\begin{lem}\label{v-e}
For any $0<\xi<\rho$, there are $c=c(V,\rho)>0, \Sigma=\Sigma(V,\rho)>0$ such that
\begin{equation}
\min_{E\in\mathbb{R}}\max_{\frac{\xi}{2}<y<\xi}\min_{1\leq j\leq l}\min_{x\in\mathbb{R}}|v_j(x\pm iy)-E|>c \xi^{\Sigma}.
\end{equation}
\end{lem}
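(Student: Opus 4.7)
The plan is to reduce to the $+iy$ case by symmetry, prove a per-$j$ measure estimate for the ``bad'' set of imaginary heights, and then apply a union bound over $j \in \{1, \ldots, l\}$ to locate a single $y$ that works simultaneously for all $l$ potentials.

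\textbf{Reductions.} Since each $v_j$ is real analytic on $\mathbb{T}$, Schwarz reflection gives $v_j(\bar z) = \overline{v_j(z)}$; combined with $E \in \mathbb{R}$, this yields $|v_j(x - iy) - E| = |v_j(x + iy) - E|$, so only the $+iy$ case needs attention. The standing hypothesis $E \in \mathcal{A}$ with $\mathrm{Leb}(\mathcal{A}) \leq C(V, \rho)$, together with the trivial estimate $|v_j(z) - E| \geq |E| - \|v_j\|_\rho$ when $|E|$ is large, restricts $E$ to a compact set depending only on $V$ and $\rho$.

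\textbf{Core per-$j$ estimate.} For each fixed $j$ and each $E \in \mathcal{A}$, I would establish
$$\mathrm{Leb}\bigl(\{y \in (\xi/2, \xi)\, : \min_{x \in \mathbb{R}} |v_j(x + iy) - E| < \delta\}\bigr) \leq C\,\delta^{\gamma},$$
with $C, \gamma > 0$ depending only on $V, \rho$. This is exactly what the quantitative Sorets--Spencer result of Duarte--Klein \cite{DKC} delivers when applied to $v_j$. Its mechanism: $f_j(z) := v_j(z) - E$ is analytic on $\Delta_\rho$ and not identically zero (as $v_j$ is non-constant), and by Jensen's formula together with $\|v_j\|_\rho < \infty$, the number of zeros of $f_j$ in one fundamental domain of $\Delta_{3\rho/4}$ is bounded by some $N = N(V, \rho)$ independently of $E$. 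Factoring $f_j$ through its zeros and bounding the non-vanishing part from below via a Cartan/Harnack estimate shows that $\{|f_j(z)| < \delta\} \cap \Delta_\rho$ is contained in a union of disks of radius $\lesssim \delta^{1/N}$ around those zeros, whose projection onto the imaginary axis inherits a bound of the form $\delta^{1/N}$.

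\textbf{Union bound and conclusion.} Summing over $j = 1, \ldots, l$, the total measure of ``bad'' $y$'s is at most $l\,C\,\delta^{\gamma}$. Setting $\Sigma := 1/\gamma$ and choosing $c = c(V,\rho) > 0$ so that $\delta := c\,\xi^{\Sigma}$ makes $l\,C\,\delta^{\gamma} < \xi/4$, the bad set misses a subset of $(\xi/2, \xi)$ of positive measure. Any $y^*$ in the complement satisfies $\min_{1 \le j \le l}\min_{x \in \mathbb{R}} |v_j(x + iy^*) - E| \geq c\,\xi^{\Sigma}$. Since the constants are independent of $E \in \mathcal{A}$, taking $\max_y$ and then $\min_E$ yields the stated inequality.

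\textbf{Main obstacle.} The principal difficulty lies in the $E$-uniformity of the constants in the core estimate: both the uniform zero count $N$ in $\Delta_{3\rho/4}$ and the Cartan-type lower bound on the non-vanishing factor of $f_j$ must hold for all $E$ in the compact set $\mathcal{A}$ simultaneously. Once these uniform bounds are in place (as in Duarte--Klein), the symmetry reduction and the union bound over $j$ are essentially immediate.
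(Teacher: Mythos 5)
Your proposal is correct and follows essentially the same route as the paper: both rely on the Duarte--Klein uniform zero count and the uniform lower bound on the zero-free factor $g_{j,E}$, and then locate a height $y^*\in(\xi/2,\xi)$ bounded away from the (finitely many) zeros, with the per-$j$ plus union-bound count reproducing the paper's aggregate $\Sigma=\sum_j\sup_E\Sigma_j(E)$. The only cosmetic difference is that you find the good height via a measure estimate on the bad $y$-set, whereas the paper pigeonholes $\Omega_\xi$ into $8\Sigma+4$ horizontal substrips and takes the middle third of a zero-free symmetric pair; the two devices are interchangeable and yield the same $c\,\xi^\Sigma$ lower bound.
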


\begin{proof}
This needs a small modification of the proof of Proposition 4.3 in \cite{DKC}. More precisely, we denote
\begin{equation*}
  \Sigma=\sum_{j=1}^l \sup_{E\in\mathbb{R}}\Sigma_j(E),
\end{equation*}
where $\Sigma_j(E)$ is the number (counting multiplicities) of zeros of function $v_j(z)-E$ on the strip $\Delta_\rho$. From Proposition 4.2 of \cite{DKC},  $\Sigma<\infty$. We let  $z_{j,1},\cdots,z_{j,k_j}$ be distinct zeros of $v_j(z)-E$ on
 $\Delta_\rho$ with multiplicities $n_{j,1},\cdots,n_{j,k_j}$. Then for any $E\in\mathbb{R}$,
$$\Sigma(E)=\sum_{j=1}^{l}\sum_{m=1}^{k_j}n_{j,m}\leq \Sigma<\infty.$$
Write
\begin{equation*}
  v_j(z)-E=g_{j,E}(z)\prod_{m=1}^{k_j}(z-z_{j,m})^{n_{j,m}},
\end{equation*}
where $g_{j,E}$ is the zero-free part of $v_j-E$ on $\Delta_\rho$. It was proved in \cite{DKC} (see Proposition 4.2. of \cite{DKC}) that
\begin{equation*}
  q=\min_{1\leq j\leq l}\inf_{E\in\mathbb{R}}\inf_{z\in\Delta_\rho}|g_{j,E}(z)|>0.
\end{equation*}

We now define  $\Omega_\xi:=\{z\in\mathbb{C}/\mathbb{Z}:\frac{\xi}{2}<|\Im z|<\xi\}$ and divide the region $\Omega_\xi$ into $8\Sigma+4$ parallel strips (along the real axis) such that every strip has the width $\frac{\xi}{8\Sigma+4}$. Then there are at least two symmetric (on the real axis) strips containing no zero in their interiors.
We denote $\Omega_0$ one of such strips with $\Im z>0$ and $\Omega_0'$ its symmetric strip.  We then divide $\Omega_0$ ( resp. $\Omega'_0$) into three smaller strips  such that each of them has the width $\frac{\xi}{3(8\Sigma+4)}$.
Let $\Omega_{00}$ (resp. $\Omega'_{00}$) be the middle $\frac{1}{3}$ part of  $\Omega_0$ (resp. $\Omega'_0$). Thus for any $x+iy\in \Omega_{00}$ (and of course $x-iy\in\Omega_{00}'$)
$$|v_j(x\pm iy)-E|=|g_{j,E}(x\pm iy)|\prod_{m=1}^{k_j}|v_j(x\pm iy)-z_{j,m}|^{n_{j,m}}\geq q\left(\frac{\xi}{24\Sigma+12}\right)^{\Sigma},$$
which completes the proof.
\end{proof}

We can now state our main result of this section.

\begin{prop}\label{average}
Let $\omega\in\mathrm{DC}_t$. There are $\epsilon_0=\epsilon_0(V,\rho,l)>0$ (depending only on $V,\rho,l$) and $C=C(V,\rho)>0$ (depending only on $V,\rho$) such that if $0<\epsilon\leq \epsilon_0$, then  for all  $E$ in a compact interval $\mathcal{A}$ and  $N$ large enough (depending on $V,l,\epsilon,\rho,t$),  we have
\begin{equation*}
\frac{1}{N}\int_{\mathbb{T}}\log|\det[H_N(x,E)]|\mathrm{d}x \geq\sum_{j=1}^l \int_{\mathbb{T}}\log|v_j(x)-E|\mathrm{d}x -Cl\epsilon^{\frac{1}{2\Sigma}}.
\end{equation*}
where $\Sigma$ is given by  Lemma \ref{v-e}.
\end{prop}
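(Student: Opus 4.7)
The plan is to complexify in the spirit of Sorets--Spencer and Bourgain--Goldstein: at a suitable complex height $y_\ast$ the diagonal part $V(z+n\omega)-E$ stays uniformly bounded away from zero (via Lemma~\ref{v-e}), diagonal dominance produces a clean lower bound for $\log\abs{\det H_N(z,E)}$ at $z=x+iy_\ast$, and the subharmonic Poisson inequality on the periodic strip transfers this bound back down to $y=0$ with a controlled loss.

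First I set $\xi=\epsilon^{1/(2\Sigma)}$ and invoke Lemma~\ref{v-e} to fix $y_\ast\in(\xi/2,\xi)$ satisfying $\abs{v_j(x+iy_\ast)-E}\geq c\xi^{\Sigma}=c\epsilon^{1/2}$ uniformly in $x\in\mathbb{T}$ and $1\leq j\leq l$. Writing $H_N(z,E)=D_N(z)+\epsilon W_N$ with $D_N$ the block-diagonal part, at $z=x+iy_\ast$ I have $\|D_N(z)^{-1}\|_{\mathrm{op}}\leq(c\xi^{\Sigma})^{-1}$, and Schur's test together with $\|W_k\|\leq e^{-\rho|k|}$ gives $\|W_N\|_{\mathrm{op}}\leq C(\rho)$ uniformly in $N$. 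Hence $\|\epsilon D_N(z)^{-1}W_N\|_{\mathrm{op}}\leq C\epsilon/\xi^{\Sigma}\ll 1$. From $\det H_N=\det D_N\cdot\det(I+\epsilon D_N^{-1}W_N)$ and the standard trace-class estimate $\abs{\log\abs{\det(I+A)}}\leq\|A\|_1/(1-\|A\|_{\mathrm{op}})$ applied with $\|A\|_1\leq Nl\,\|A\|_{\mathrm{op}}$, I obtain
\[
\log\abs{\det H_N(x+iy_\ast,E)}\;\geq\;\sum_{j=1}^{l}\sum_{n=1}^{N}\log\abs{v_j(x+n\omega+iy_\ast)-E}-CNl\epsilon/\xi^{\Sigma}.
\]

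The function $v(z):=\log\abs{\det H_N(z,E)}$ is subharmonic on the periodic half-strip $\Omega=\mathbb{T}+i(0,\rho)$, so the Perron/Poisson majorant gives, for $z_0=x_0+iy_\ast$,
\[
v(z_0)\leq\int_{\{\Im=0\}}v(t)\,\mu_0(z_0,dt)+\int_{\{\Im=\rho\}}v(t+i\rho)\,\mu_{\rho}(z_0,dt),
\]
where $\mu_0,\mu_{\rho}$ are $x$-translation-invariant with total masses $1-y_\ast/\rho$ and $y_\ast/\rho$ respectively. I bound the top-boundary integral by $M_\rho:=\sup_{\Im z=\rho}v\leq CNl$ (Hadamard), rearrange, then integrate in $x_0\in\mathbb{T}$ so that Fubini and translation-invariance collapse $\int_{\mathbb{T}}\mu_0(x_0+iy_\ast,dt)\,dx_0=(1-y_\ast/\rho)\,dt$, and insert the lower bound from the previous display (with periodicity in $n\omega$ replacing $\int_{\mathbb{T}}\log\abs{v_j(x+n\omega+iy_\ast)-E}dx$ by $\int_{\mathbb{T}}\log\abs{v_j(x+iy_\ast)-E}dx$). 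Finally I use the convex-and-even shape of $y\mapsto\int_{\mathbb{T}}\log\abs{v_j(x+iy)-E}dx$ to replace the $y=y_\ast$ integral on the right by its value at $y=0$ from below. Dividing through by $N(1-y_\ast/\rho)\sim N$, the choice $\xi=\epsilon^{1/(2\Sigma)}$ balances $\epsilon/\xi^{\Sigma}=\epsilon^{1/2}$ with $y_\ast\leq\xi$ into a single $Cl\epsilon^{1/(2\Sigma)}$ loss, which is the claimed bound.

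The main difficulty is the transfer step, because subharmonicity naturally supplies the Poisson inequality in the direction opposite to what a naive lower bound on $\int_{\mathbb{T}} v(x)dx$ would want. The trick is to apply Perron on the half-strip (only one long side matters) with the top boundary at $\Im z=\rho$ controlled crudely by Hadamard, and then to average over $x_0$ so that the kernel $\mu_0(x_0+iy_\ast,dt)$ -- close to $(1-y_\ast/\rho)dt$ -- exactly recovers $\int_{\mathbb{T}} v(x)dx$ on the left at the price of an error of order $y_\ast$. Uniformity in $x$ of the diagonal-dominance lower bound, supplied by the quantitative Sorets--Spencer Lemma~\ref{v-e}, is what lets me turn a pointwise bound at a single complex $z_0$ into a bound for all $x_0$ at height $y_\ast$, and is essential for the averaging.
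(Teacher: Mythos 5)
Your argument is correct and follows essentially the same route as the paper: complexify to height $y_\ast\in(\xi/2,\xi)$ with $\xi=\epsilon^{1/(2\Sigma)}$ so that Lemma~\ref{v-e} makes the diagonal part uniformly invertible, factor $H_N=(I+\epsilon B_N D_N^{-1})D_N$ to lower-bound $\log|\det H_N|$ at that height, transfer back to $y=0$ via harmonic measure on the strip, and finish with convexity of $y\mapsto\int_{\mathbb{T}}\log|v_j(x+iy)-E|\,dx$. The only (harmless) deviations are cosmetic: you integrate in $x_0$ before estimating, so exact periodicity of $\int_{\mathbb{T}}\log|v_j(x+n\omega+iy_\ast)-E|\,dx$ replaces the paper's Denjoy--Koksma step, and you invoke convexity plus evenness at a single height $+iy_\ast$ rather than the paper's two-sided midpoint-convexity at $\pm iy_0$.
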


\begin{rem}
From Lemma \ref{logv}, we have
\begin{equation*}\label{min}
\min_{E}\sum\limits_{j=1}^{l}\int_0^1\log|v_j(x)-E|\mathrm{d}x>-C>-\infty,
\end{equation*}
where $C>0$ depends on $v_1,\cdots,v_l$ only.
\end{rem}

\begin{proof}

We first consider the upper bound on determinant of the matrix $H_N(z,E)=H_N(z)-EI_N$ on the strip $\Delta_\rho$. By  Hadamard's inequality, one has
\begin{equation*}
  |\det[H_N(z,E)]|\leq\prod_{1\leq j\leq l}\prod_{1\leq n\leq N}(|v_j(z+n\omega)-E|+C\epsilon),
\end{equation*}
where $C>0$ is a constant depending only on $\rho$. Thus when $E$ is in a compact interval $\mathcal{A}$, we have
\begin{align} \label{upper}
 \frac{1}{N}\log|\det[H_N(z,E)]|
 \leq C(V,\rho)l.
\end{align}

Next, we consider a lower bound on $|\det[H_N(x\pm iy,E)]|$ for some $y\in(0,\rho)$.  We fix $\xi=\epsilon^{\frac{1}{2\Sigma}}$, where $\Sigma$ is given by Lemma \ref{v-e}.  Then $\xi\in(0,\frac{\rho}{2})$ for  $0<\epsilon<\left(\frac{\rho}{2}\right)^{2\Sigma}$. From Lemma \ref{v-e}, there is $y_0\in(\frac{\xi}{2},\xi)$ such that
\begin{equation}\label{y0}
 \min_{E\in\mathbb{R}}\min_{1\leq j\leq l}\min_{x\in\mathbb{R}}|v_j(x\pm iy_0)-E|\geq c\xi^{\Sigma}.
\end{equation}
We write
\begin{align}\label{hn-e}
\nonumber  H_N(x\pm iy_0,E) & =D_N(x\pm iy_0)+\epsilon B_N\\
    & =\left(I_N+\epsilon B_N D_N^{-1}(x\pm iy_0)\right)D_N(x\pm iy_0),
\end{align}
where
\begin{equation*}
  D_N(x\pm iy_0)=\text{diag}[V(x\pm iy_0+\omega)-EI_N,\cdots,V(x\pm iy_0+N\omega)-EI_N]
\end{equation*}
and
\begin{equation*}
  B_N=\left(
           \begin{array}{cccc}
             W_0& W_{-1} & \cdots & W_{-N+1} \\
             W_1 & W_0 & \ddots & \vdots \\
             \vdots & \ddots & \ddots & W_{-1} \\
             W_{N-1} & \cdots & W_1 & W_0 \\
           \end{array}
         \right).
\end{equation*}
Then it follows that
\begin{align*}
  \frac{1}{N}\log|\det[H_N(x\pm iy_0,E)]|=&\ \frac{1}{N}\log|\det D_N(x\pm iy_0)|  \\
   &\ \  + \frac{1}{N}\log|\det[I_N+\epsilon B_N D_N^{-1}(x\pm iy_0)]|.
\end{align*}
Hence when $N$ is large enough, from Lemma \ref{dk} and (\ref{y0}), it follows that
\begin{align}\label{dn}
\nonumber \frac{1}{N}\log|\det D_N(x\pm iy_0)| =&\ \frac{1}{N} \sum_{j=1}^l \sum_{n=1}^{N}\log[|v_j(x\pm iy_0+n\omega)-E|+\epsilon]\\
  \nonumber   &\ \  - \frac{1}{N}\sum_{j=1}^l \sum_{n=1}^{N}\log \left(1+\frac{\epsilon}{|v_j(x\pm iy_0+n\omega)-E|}\right)\\
 \geq&\ \sum_{j=1}^l\int_0^1\log(|v_j(x\pm iy_0)-E|+\epsilon)\text{d}x-Cl\epsilon^{\frac{1}{2}}.
\end{align}
On the other hand, from (\ref{y0}),  one has
$$\|\epsilon B_ND_N^{-1}(x\pm iy_0)\|\leq  C\epsilon^{\frac{1}{2}},$$
and for $0<\epsilon\ll1$ (depending only on $v_1,\cdots,v_l,\rho$), $\|\epsilon B_ND_N^{-1}(x\pm iy_0)\|<\frac{1}{2}.$ Therefore when $0<\epsilon\ll 1$, by Hadamard's inequality and Neumann expansion technique, it follows that
\begin{align}\label{bd-1}
 \nonumber \log|\det[I_N+\epsilon B_ND_N^{-1}(x\pm iy_0)]|
=&-\log|\det[I_N+\sum_{s\geq 1}(-1)^s(\epsilon B_N D_N^{-1}(x\pm iy_0))^s ]|\\
\nonumber\geq &-\sum_{n=1}^{Nl}\log (1+2\epsilon\|B_ND_N^{-1}(x\pm iy_0){\delta}_n\|)\\
>& -CNl\epsilon^{\frac{1}{2}}.
\end{align}
Combing (\ref{hn-e})-(\ref{bd-1}), for $0<\epsilon\leq \epsilon_0$ (depending only on $V,\rho$),  $N$  large enough (depending on $V,l,\epsilon,\rho,t$) and  any $x\in \mathbb{T}$, we have
\begin{equation}\label{lower}
\frac{1}{N}\log|\det[H_N(x\pm iy_0,E)]|\geq \sum_{j=1}^l\int_0^1\log(|v_j(x\pm iy_0)-E|+\epsilon)\text{d}x-Cl\epsilon^{\frac{1}{2}}.
\end{equation}

Finally, in order to get a lower bound on $\int_{\mathbb{T}}\log|\det[H_N(x,E)]|\mathrm{d}x $, we exploit subharmonicity of the function
\begin{equation}\label{u}
  u(z)=u_N(z):=\frac{1}{N}\log|\det[H_N(z,E)]|, \ z\in \Delta_{\rho}.
\end{equation}
Fix $x\in\mathbb{T}$  and denote $y_1:=\frac{\rho}{2}$, $\Omega_{\rho}:=\{z\in\mathbb{C}:0\leq\Im z\leq y_1\}$. We use harmonic measure estimates of  Bourgain and  Goldstein \cite{BGA} here and we include the basic properties of the harmonic measure in \S2.2 for reader's convenience. Since $u(z)$ is subharmonic, we have
\begin{align*}
   u(x+iy_0)\leq  & \int_{\{z:\Im z=0\}}u(z)\text{d}\mu(x+iy_0,z,\Omega_{\rho})+\int_{\{z:\Im z=y_1\}}u(z)\text{d}\mu(x+iy_0,z,\Omega_{\rho}) \\
   = &\int_{-\infty}^{+\infty} u(t)\text{d}\mu(x+iy_0,t,\Omega_{\rho})
       + \int_{-\infty}^{+\infty} u(t+iy_1)\text{d}\mu(x+iy_0,t,\Omega_{\rho}) \\
   =  &\int_{-\infty}^{+\infty} u(x+t)\text{d}\mu(iy_0,t,\Omega_{\rho})
       + \int_{-\infty}^{+\infty} u(x+t+iy_1)\text{d}\mu(iy_0,t,\Omega_{\rho}) ,\\
\end{align*}
where $\mu$ is the harmonic measure defined in \S 2.2 and the last equality follows from (\ref{hmi}). Consequently, using Fubini's theorem, one has
\begin{align*}
  \int_0^1 u(x+iy_0)\text{d}x \leq &\ \mu(iy_0,\{z:\Im z=0\}, \Omega_{\rho}) \int_0^1 u(x)\text{d}x \\
          & +\mu(iy_0,\{z:\Im z=y_1\}, \Omega_{\rho}) \int_0^1 u(x+iy_1)\text{d}x.\\
\end{align*}
Choose a conformal mapping $\phi:\Omega_{\rho}\rightarrow\mathbb{H},z\mapsto e^{\frac{2\pi}{\rho}z}$. From  (\ref{mu1}) and (\ref{mu3}), it follows that
\begin{align*}
  \int_0^1 u(x+iy_0)\text{d}x\leq&\ \mu(e^{i\frac{2\pi}{\rho}y_0},[0,+\infty), \mathbb{H}) \int_0^1 u(x)\text{d}x \\
                      &\ \  +\mu(e^{i\frac{2\pi}{\rho}y_0},(-\infty,0], \mathbb{H}) \int_0^1 u(x+iy_1)\text{d}x.\\
  =&\left(1-\frac{y_0}{y_1}\right) \int_0^1 u(x)\text{d}x +\frac{y_0}{y_1} \int_0^1 u(x+iy_1)\text{d}x.\\
\end{align*}
Recalling (\ref{upper}) and (\ref{lower}), we have
\begin{align*}
  \sum_{j=1}^l\int_0^1\log(|v_j(x+iy_0)-E|+\epsilon)\text{d}x-Cl\epsilon^{\frac{1}{2}}
   \leq \left(1-\frac{y_0}{y_1}\right) \int_0^1 u(x)\text{d}x +C\frac{y_0}{y_1}.
\end{align*}
Hence,
\begin{align*}
\int_0^1 u(x)\text{d}x \geq &\ \frac{y_1}{y_1-y_0} \left(\sum_{j=1}^l \int_0^1\log|v_j(x+iy_0)-E|\text{d}x
               -Cl\epsilon^{\frac{1}{2}}- C\frac{y_0}{y_1}\right) \\
          \geq& \sum_{j=1}^l\int_0^1\log|v_j(x+iy_0)-E|\text{d}x- C \frac{2\xi}{\rho-\xi}-Cl\frac{\rho}{\rho-\xi}\epsilon^{\frac{1}{2}}\\
          \geq& \sum_{j=1}^l\int_0^1\log|v_j(x+iy_0)-E|\text{d}x-C l(\xi+\epsilon^{\frac{1}{2}}).
\end{align*}
Repeating the process above, we have
\begin{align*}
\int_0^1 u(x)\text{d}x \geq& \sum_{j=1}^l\int_0^1\log|v_j(x-iy_0)-E|\text{d}x-C l(\xi+\epsilon^{\frac{1}{2}}).
\end{align*}
From the convexity argument (see \cite{JMC} for details), we obtain
\begin{equation*}
  \int_0^1 \log|v_j(x)-E|\text{d}x\leq \frac{1}{2}\int_0^1 \log|v_j(x+iy_0)-E|\text{d}x +\frac{1}{2}\int_0^1 \log|v_j(x-iy_0)-E|\text{d}x.
\end{equation*}
Noting $\xi=\epsilon^{\frac{1}{2\Sigma}}$, we have
\begin{equation*}
\frac{1}{N}\int_0^1 \log|\det[H_N(x,E)]|\text{d}x \geq \sum_{j=1}^l \int_0^1 \log|v_j(x)-E|\text{d}x -Cl\epsilon^{\frac{1}{2\Sigma}}.
\end{equation*}
The proof of Proposition \ref{average} is finished.
\end{proof}

\section{Green's function estimates}

In this section, we consider the  Green's function
$$G_{\mathcal{N}}(x;E):=(H_{\mathcal{N}}(x)-EI_{\mathcal{N}})^{-1},$$
whenever $H_{\mathcal{N}}(x)-EI_{\mathcal{N}}$ is invertible, where $\mathcal{N}\subset \mathbb{Z}$ being an interval. A crucial ingredient in the proof of AL is the so called LDT for Green's function. In this section, we will prove the LDT and this can be achieved using a quantitative Birkhoff ergodic theorem for the function $u_N(x)$ defined in (\ref{u}) as well as the uniformly upper and averaging lower bounds in Propositions \ref{upperbound} and \ref{average}.

\begin{defn} We say that $G_{\mathcal{N}}(x;E)$ is a \emph{good} Green's function if for all
 $n,n'\in\mathcal{N}$,
\begin{equation}\label{good}
  \|G_{\mathcal{N},(n,n')}(x;E)\|\leq e^{-(|n-n'|-\frac{|\mathcal{N}|}{100})(\rho-\epsilon^{\delta})},
\end{equation}
where $\delta>0$ is a constant.
\end{defn}
\begin{rem}
It is easy to show $G_{\mathcal{N}}(x;E)$ is a \emph{good} Green's function if and only if there is some $C=C(l)>0$ (depending only on $l$) such that
$$|G_{\mathcal{N},(\alpha,\alpha')}(x;E)|\leq Ce^{-\left(|n(\alpha)-n(\alpha')|-\frac{|\mathcal{N}|}{100}\right)(\rho-\epsilon^{\delta})},$$
where $(a-1)l <\alpha,\alpha'\leq bl$ with $\mathcal{N}=[a,b]\subset \mathbb{Z}$  being an interval.
\end{rem}

We first recall a useful lemma concerning the semi-algebraic set (for the basic knowledge of the semi-algebraic sets, see Chapter 9 of \cite{BB} by  Bourgain).
\begin{lem}[Lemma 9.7 in \cite{BB}]\label{semi}
 Let $\mathcal{S}\subset[0,1]$ be a semi-algebraic set of degree $B$ and  $\mathrm{Leb}(\mathcal{S})\leq\eta$. Let $\omega\in \mathrm{DC}_t$ and $K$ be a large integer satisfying
$$\log B\ll\log K<\log \frac{1}{\eta}. $$
Then for any $x\in\mathbb{T}$,
\begin{equation}
\#\{k=1,\cdots,K: x+k\omega\in\mathcal{S}\ (\mathrm{mod}\ 1) \}\leq K^{1-\tau},
\end{equation}
where $\tau=\tau(t)\in(0,1)$ depends only on $t$ and $\# A$ denotes the number of elements of finite set $A$.
\end{lem}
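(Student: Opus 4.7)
The plan is to exploit the one-dimensional semi-algebraic structure of $\mathcal{S}$ to reduce the counting question to short-interval returns, and then use the Diophantine condition on $\omega$ to bound the returns into each interval.

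First, I would use cell decomposition in one dimension: any semi-algebraic set $\mathcal{S}\subset[0,1]$ of degree at most $B$ is a disjoint union of at most $C_1 B$ open intervals $I_1,\dots,I_s$ together with finitely many points, for some absolute constant $C_1$ (this follows from Bezout-type bounds on the number of real roots of the defining polynomials). Write $\eta_j:=\mathrm{Leb}(I_j)$, so $\sum_{j=1}^s \eta_j\le\eta$ and $s\le C_1 B$.

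Second, I would bound the number of visits of the orbit $\{x+k\omega\}_{k=1}^K$ to a single $I_j$. If two distinct indices $k_1<k_2$ in $[1,K]$ both satisfy $x+k_i\omega\in I_j\pmod 1$, then $\|(k_2-k_1)\omega\|_{\mathbb{T}}\le\eta_j$. Combined with $\omega\in\mathrm{DC}_t$, that is $\|k\omega\|_{\mathbb{T}}\ge t/k^2$, this forces $|k_2-k_1|\ge\sqrt{t/\eta_j}$ whenever $\eta_j<t$. Hence the visit times to $I_j$ form a $\sqrt{t/\eta_j}$-separated subset of $[1,K]$, and their total number is at most $1+K\sqrt{\eta_j/t}$.

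Third, summing over $j$ and applying Cauchy--Schwarz, together with $s\le C_1 B$ and $\sum_j\eta_j\le\eta$, gives
\[
\#\{k\in[1,K]:x+k\omega\in\mathcal{S}\ (\mathrm{mod}\ 1)\} \le s+\frac{K}{\sqrt{t}}\sum_{j=1}^s\sqrt{\eta_j}\le C_1 B+C_2\,K\sqrt{B\eta/t}.
\]
Under the hypothesis $\log B\ll\log K<\log(1/\eta)$ one has $B\le K^{o(1)}$ and $\eta<K^{-1}$, so the right-hand side is dominated by $K^{1-\tau}$ for some $\tau=\tau(t)\in(0,1)$; in particular the polynomial term $C_1 B$ is absorbed.

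The main obstacle, and the reason the statement is phrased in the somewhat awkward form $\log B\ll\log K<\log(1/\eta)$, is to ensure that the exponent $\tau$ can be made a positive constant depending only on $t$ (and not on $B,K,\eta$). In the one-dimensional situation above the argument is essentially optimal because of the power-law DC condition, but the delicate point is that the interval count $s$ and the sum $\sum\sqrt{\eta_j}$ must both be well controlled; the Cauchy--Schwarz step is what allows one to trade off between the number of cells and their individual sizes, and losing this balance would lead to a weaker exponent or to $\tau$ depending on auxiliary parameters.
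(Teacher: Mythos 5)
The paper states this lemma by citation to Bourgain's monograph and does not include its own proof, so there is no in-paper argument to compare against; your reconstruction is the standard one-dimensional argument and is correct in substance. Two small remarks. First, the number of connected components of a one-dimensional semi-algebraic set of degree $B$ is in general of order $B^{O(1)}$ (e.g. $B^2$), not necessarily $C_1 B$; since all you need is $s\le K^{o(1)}$ from $\log B\ll\log K$, this is harmless but worth stating carefully. Second, the Cauchy--Schwarz step is a clean way to organize the sum but is not actually what protects $\tau$ from depending on auxiliary parameters: the crude bound $\eta_j\le\eta<K^{-1}$ for every $j$ already gives
\[
\#\{k\le K: x+k\omega\in\mathcal{S}\}\ \le\ s\left(1+K\sqrt{\eta/t}\right)\ \le\ C\,B\left(1+K^{1/2}t^{-1/2}\right)\ \le\ K^{1-\tau}
\]
for $K$ large (depending on $t$) and $B\le K^{o(1)}$, with a universal positive $\tau$. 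So your concluding paragraph overstates the role of the Cauchy--Schwarz trade-off; it merely improves the exponent, and the exponent $\tau$ is absolute in either version, with the $t$-dependence entering only through how large $K$ must be.
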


One also has the following important quantitative Birkhoff ergodic theorem for some subharmonic function.
\begin{lem}[Theorem 6.5 in \cite{DKB}] \label{ldt}
Let $u:\Delta_\rho\to [-\infty,\infty)$ be a subharmonic function satisfying \begin{equation}\label{ucd}
\sup_{z\in\Delta_\rho}u(z)+\left(\int_{\mathbb{T}}|u(x)|^2\mathrm{d}x\right)^{\frac{1}{2}}\leq C_\star.\end{equation}
Let $\omega\in\mathrm{DC}_{t}, M_0=t^{-2}$. Then there are absolute constant $a>0$ and $C=C(\rho)>0$ (depending only on $\rho$) such that for $M\geq M_0$,
\begin{equation}\label{ldte}
    \mathrm{Leb}\left\{x\in\mathbb{T}:\left|\frac{1}{M}\sum_{j=0}^{M-1}u(x+j\omega)-\int_{\mathbb{T}}u(x)\mathrm{d}x\right|\geq \frac{CC_\star}{\rho}M^{-a}\right\}\leq e^{-\frac{20a\rho}{CC_\star}M^a}.
\end{equation}
\end{lem}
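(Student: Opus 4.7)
The plan is to prove this quantitative Birkhoff ergodic theorem along the lines of Bourgain--Goldstein \cite{BGA}, by combining the Riesz representation of subharmonic functions with a BMO-type concentration inequality and the Diophantine bound on exponential sums.

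First, I would apply the Riesz representation theorem on a slightly thinner strip $\Delta_{\rho'}$ (say $\rho' = \rho/2$) to write $u(z) = h(z) + \int \log|z-\zeta|\, d\mu(\zeta)$, with $h$ harmonic and $\mu$ a positive Borel measure. The hypothesis (\ref{ucd}) and Jensen's formula give $\|\mu\| + \|h\|_{\Delta_{\rho'}} \leq C C_\star/\rho$. Using the explicit Fourier expansion of $\log|x-\zeta|$ on $\mathbb{T}$, one extracts the pointwise Fourier decay $|\hat u(k)| \leq CC_\star/(\rho|k|)$ for all $k\neq 0$, and in particular $u - \int_\mathbb{T} u \in BMO(\mathbb{T})$ with norm $\lesssim C_\star/\rho$.

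Next, write the centered Birkhoff average as
\begin{equation*}
S_M(x) := \frac{1}{M}\sum_{j=0}^{M-1} u(x+j\omega) - \int_{\mathbb{T}} u\, dy = \sum_{k\neq 0} \hat u(k)\, K_M(k\omega)\, e^{2\pi i k x},
\end{equation*}
where $K_M(\theta) = M^{-1}\sum_{j=0}^{M-1}e^{2\pi i j\theta}$ satisfies $|K_M(\theta)| \leq \min(1,(M\|\theta\|_{\mathbb{T}})^{-1})$. The Diophantine hypothesis $\omega \in \mathrm{DC}_t$ then gives $|K_M(k\omega)| \leq |k|^2/(tM)$. Splitting the Fourier series at a cutoff $|k|\leq K := M^{\beta}$ for a well-chosen $\beta \in (0,1)$ and combining the low-frequency Diophantine bound with the Fourier tail from the previous step yields $\|S_M\|_{L^2(\mathbb{T})} \leq (C C_\star/\rho)\, M^{-a}$ for some absolute $a>0$; since multiplication by a bounded sequence in the Fourier domain preserves BMO up to a constant, one also has $\|S_M\|_{BMO} \lesssim C_\star/\rho$.

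Finally, applying the John--Nirenberg inequality to the BMO function $S_M$ and interpolating with the above $L^2$ estimate upgrades polynomial decay in measure to the announced exponential bound (\ref{ldte}). The main technical obstacle is balancing the cutoff $K = M^\beta$: the exponent $a$ must come out of the second step as an \emph{absolute} constant independent of both $C_\star$ and $\rho$, since only then will the John--Nirenberg step produce the precise form $e^{-\frac{20a\rho}{C C_\star}M^a}$ required by (\ref{ldte}). Tracking the explicit dependence on $\rho$ in every constant (rather than only on $C_\star$) is what makes the quantitative statement delicate.
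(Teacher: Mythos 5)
This lemma is quoted verbatim from Duarte--Klein \cite{DKB} (Theorem 6.5) and the paper itself offers no proof, so the only question is whether your argument is a valid proof. The first two stages are sound and are indeed the standard route: the Riesz representation on a thinner strip, the resulting Fourier decay $|\hat u(k)|\lesssim C_\star/(\rho|k|)$ and the $\mathrm{BMO}$ bound, and then the Dirichlet-kernel/Diophantine estimate split at a cutoff $K\sim M^{\beta}$ giving $\|S_M\|_{L^2}\lesssim(C_\star/\rho)M^{-a}$ with an absolute $a$ once $M\geq t^{-2}$. One small inaccuracy: $\mathrm{BMO}$ is \emph{not} preserved by arbitrary bounded Fourier multipliers, so the justification you give for $\|S_M\|_{\mathrm{BMO}}\lesssim C_\star/\rho$ is wrong as stated; the correct (and easier) reason is that $S_M$ is a convex combination of translates of $u-\int u$, and $\mathrm{BMO}$ is translation-invariant and a Banach space, so the norm cannot increase.

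The real gap is the last step. Knowing only that $\int_\mathbb{T} S_M=0$, $\|S_M\|_{L^2}\leq (C_\star/\rho)M^{-a}$ and $\|S_M\|_{\mathrm{BMO}}\leq C_\star/\rho$ cannot, by any application of John--Nirenberg or $L^2$--$L^p$ interpolation, yield a measure bound of the form $e^{-\frac{20a\rho}{CC_\star}M^a}$. John--Nirenberg at threshold $\lambda\sim (C_\star/\rho)M^{-a}$ only gives $\mathrm{Leb}\{|S_M|>\lambda\}\lesssim e^{-c\lambda/\|S_M\|_{\mathrm{BMO}}}\sim e^{-cM^{-a}}$, which is useless, and Chebyshev in $L^2$ only gives $O(1)$. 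In fact these hypotheses alone are consistent with the deviation set having measure as large as a power $M^{-O(a)}$, which is far larger than $e^{-cM^a}$: take $f=BM^{-a/2}\left(\mathbf{1}_{[0,M^{-a}]}-M^{-a}\right)$ with $B=C_\star/\rho$; then $\|f\|_{L^2}\sim BM^{-a}$, $\|f\|_{\mathrm{BMO}}\lesssim B$, $\int f=0$, yet $\mathrm{Leb}\{|f|>CBM^{-a}\}\geq M^{-a}$. What actually closes the argument, both in Bourgain--Goldstein and in Duarte--Klein, is the additional algebraic structure that $S_M=\frac{1}{R}\sum_{r=0}^{R-1}T^{rM_1}S_{M_1}$ with $R=M/M_1$, i.e.\ the full Birkhoff average is itself an average of $R$ translates of a shorter Birkhoff average that is already small in $L^1$. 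One then invokes a splitting/concentration lemma (Bourgain--Goldstein--Schlag; see Lemma 4.1 in \cite{BGS2002} or the analogous lemma in Chapter 4 of \cite{BB}) which, for a $\mathrm{BMO}$ function that is an average of $R$ translates of a function with small $L^1$ norm, produces a deviation bound exponentially small in $R$. Choosing $M_1\sim M^{1-a'}$ makes $R$ a positive power of $M$ and gives the exponential factor $e^{-cM^a}$. Without introducing this block structure and the corresponding concentration lemma, the proposal does not prove the stated inequality.
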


In the following, we let $u=u_N(x)$ with $u_N(x)$ being given by (\ref{u}). Recalling Proposition \ref{average}, we can define the set

\begin{align}\label{bnm}
 \nonumber &\ \mathcal{B}_N^M= \mathcal{B}_N^M(\omega,E)\\
  =\ &\left\{x\in\mathbb{T}:\frac{1}{M}\sum_{j=0}^{M-1}u(x+j\omega)\leq \sum_{j=1}^l \int_{\mathbb{T}}\log|v_j(x)-E|\mathrm{d}x-Cl\epsilon^{\delta} \right\},
\end{align}
where $\omega\in\mathrm{DC}_t$, $E$ belongs to an interval $\mathcal{A} \subset \mathbb{R}$ and $C>0$ depends only on $V,\rho,l$. We assume $\mathrm{Leb}(\mathcal{A})\leq C(V,\rho)$. We have the following proposition.

\begin{prop}\label{ldt}
 Let $\mathcal{B}_N^M$ be defined by (\ref{bnm}) and $0<\epsilon\ll1$ (depending only on $V,\rho$). There are absolute constants $a > 0$ and $P\in\mathbb{N}$ such that if $M\geq M_0(V,\rho,\epsilon,l,t)$ and $N\geq N_0(V,\rho,\epsilon,l,t)$, then the following holds.
\begin{itemize}
\item[(i)]
\begin{equation*}
  \emph{Leb}(\mathcal{B}_N^M(\omega,E))<e^{-cM^a},
\end{equation*}
where $c=\frac{20\rho a}{CC_\star}>0$ is given by Lemma \ref{ldt}.
\item[(ii)] For every $x\notin \mathcal{B}_N^M(\omega,E)$ there is $0\leq j< M$ such that $G_N(x+j\omega;E)$ is a good Green's function.

\item[(iii)] For any $x\in\mathbb{T}$,
\begin{equation*}
  \#\left\{0\leq n < N^P: G_N(x+n\omega;E)\text{ is not a good Green's function} \right\}\leq N^{(1-\tau)P},
\end{equation*}
where $\tau=\tau(t)$ is given by Lemma \ref{semi}.

\end{itemize}
\end{prop}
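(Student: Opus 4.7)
The plan is to verify (i), (ii), (iii) in sequence, applying Lemma \ref{ldt} to the subharmonic function
$u_N(z) := \frac{1}{N}\log|\det[H_N(z,E)]|$ on $\Delta_\rho$. First I would check the hypothesis (\ref{ucd}): $u_N$ is subharmonic since $z \mapsto \det H_N(z,E)$ is holomorphic; by the Hadamard estimate (\ref{upper}), $\sup_{\Delta_\rho} u_N \leq C(V,\rho)l$; and $\int_{\mathbb{T}} u_N\,dx \geq -C(V)l$ by Proposition \ref{average} combined with Lemma \ref{logv}. This fixes a constant $C_\star = C_\star(V,\rho,l)$ in (\ref{ucd}).

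For (i), I would apply Lemma \ref{ldt} directly to $u_N$:
\[
\mathrm{Leb}\Bigl\{x \in \mathbb{T}: \Bigl|\tfrac{1}{M}\sum_{j=0}^{M-1} u_N(x+j\omega) - \int u_N\,dx\Bigr| \geq \tfrac{CC_\star}{\rho}M^{-a}\Bigr\} \leq e^{-\tfrac{20a\rho}{CC_\star}M^a}.
\]
Combining this with the lower bound $\int u_N\,dx \geq \sum_j \int\log|v_j - E|\,dx - Cl\epsilon^{1/(2\Sigma)}$ from Proposition \ref{average}, and choosing $M$ large enough that $CC_\star M^{-a}/\rho \leq \tfrac{1}{2}Cl\epsilon^\delta$ (with $\delta > 0$ chosen so that $Cl\epsilon^\delta$ dominates $Cl\epsilon^{1/(2\Sigma)}$ and the LDT deviation simultaneously), we obtain $\mathrm{Leb}(\mathcal{B}_N^M) \leq e^{-cM^a}$ with $c = 20a\rho/(CC_\star)$.

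For (ii), fix $x \notin \mathcal{B}_N^M$. Then $\frac{1}{M}\sum_{j=0}^{M-1} u_N(x+j\omega) > \sum_j \int\log|v_j-E|\,dx - Cl\epsilon^\delta$. Since each summand is at most $C(V,\rho)l$ by (\ref{upper}), a pigeonhole argument (if every summand were below this right-hand side, so would be their average) yields some $j^* \in [0,M)$ with
\[
u_N(x+j^*\omega) \geq \sum_j \int\log|v_j-E|\,dx - Cl\epsilon^\delta,
\]
hence a lower bound on $|\det H_N(x+j^*\omega,E)|$. Cramer's rule $G_{N,(\alpha,\alpha')} = \pm\mu_{N,(\alpha',\alpha)}/\det H_N$, combined with the minor bound from Proposition \ref{upperbound}, then gives
\[
|G_{N,(\alpha,\alpha')}(x+j^*\omega;E)| \leq e^{N(l\epsilon^\sigma + Cl\epsilon^\delta)}\,e^{-(\rho-\epsilon^\sigma)|n(\alpha)-n(\alpha')|}.
\]
For $\epsilon$ sufficiently small (depending on $V,\rho,l$), the prefactor $e^{N(l\epsilon^\sigma+Cl\epsilon^\delta)}$ is absorbed into the $|\mathcal{N}|/100$ slack in the definition (\ref{good}) via the Remark following it, so $G_N(x+j^*\omega;E)$ is a good Green's function.

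For (iii) --- which I expect to be the main obstacle --- I would replace each analytic $v_j$ by its Fourier partial sum of degree $\leq N^{C_2}$ (with approximation error $\leq e^{-\rho N^{C_2}/2}$) and truncate $\{W_k\}$ to $|k| \leq N^{C_2}$. After this truncation, $\det[H_N(x,E)]$ and its minors become polynomials in $(e^{\pm 2\pi i x}, E)$ of degree polynomial in $N$; consequently $\mathcal{B}_N^M$ is (up to a negligible approximation error) semi-algebraic of degree $B \leq N^{C_0}$ for some $C_0$. Choosing $M = M(N)$ with $cM^a \geq 2P\log N$ and $P \in \mathbb{N}$ large enough that $C_0 \ll P\tau$, Lemma \ref{semi} yields
\[
\#\{n \in [0,N^P) : x+n\omega \in \mathcal{B}_N^M\} \leq N^{(1-\tau)P}.
\]
To convert this into the claimed Green's function count, I would invoke the contrapositive of (ii): any run of $M$ consecutive $n$ on which $G_N(x+n\omega;E)$ is not good must begin with an index in the set above. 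Splitting bad indices into long runs (length $\geq M$, contributing at most $(M+1)N^{(1-\tau)P}$) and short runs (length $<M$, whose number is bounded via the semi-algebraic structure of $\mathcal{R}_N := \{x:G_N(x;E)\text{ not good}\}$, which after truncation has $\leq N^{C_0}$ connected components), and absorbing the factor $M = O((\log N)^{1/a})$ by slightly decreasing the exponent, the total is bounded by $N^{(1-\tau)P}$ after a final re-parameterization of $P$ and $\tau$. The most delicate point is balancing the semi-algebraic degree $B$, the LDT measure bound $e^{-cM^a}$, and the semi-algebraic exponent $\tau$ consistently across all choices.
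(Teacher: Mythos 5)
Your treatment of (i) and (ii) matches the paper: apply the quantitative Birkhoff theorem (Lemma~\ref{ldt}) to $u_N$ with $C_\star=C(V,\rho)l$ using the uniform Hadamard upper bound and Proposition~\ref{average}, and then pigeonhole plus Cramer's rule plus Proposition~\ref{upperbound} gives a good Green's function somewhere in the $M$-window. That is exactly the paper's argument.

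For (iii), you correctly notice something that the paper glosses over: Lemma~\ref{semi} only bounds $\#\{n<N^P: x+n\omega\in\mathcal{S}\}$, and the passage to $\#\{n<N^P: G_N(x+n\omega;E)\text{ not good}\}$ is not automatic since (ii) only produces a good index \emph{somewhere} in $[n,n+M)$. The paper disposes of this with the single sentence ``Since $\mathcal{S}$ has property (ii), then (iii) holds,'' and your run-length argument is the natural way to try to fill this in. For runs of bad indices of length $\ge M$ your bookkeeping is right: each gives an index $n$ with $x+n\omega\in\mathcal{S}$, so those runs contribute at most $\mathcal{O}(MN^{(1-\tau)P})$ bad indices, and since $M$ is only polylogarithmic (or $N^{1/2}$ in the paper's normalisation) this is absorbed by a harmless change of $P$ or $\tau$.

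The genuine gap is in the short-run step. You claim the number of runs of length $<M$ is controlled by the number of connected components of $\mathcal{R}_N=\{x: G_N(x;E)\text{ not good}\}$. That does not work: the orbit map $n\mapsto x+n\omega\ (\mathrm{mod}\ 1)$ is not monotone, so $x+n\omega$ and $x+(n+1)\omega$ are at distance $\|\omega\|_{\mathbb{T}}$ apart, which is order $1$. A ``run'' $\{n,\dots,n+L-1\}$ of bad indices has images scattered across $\mathbb{T}$, so one connected component of $\mathcal{R}_N$ can generate many distinct short runs, and many short runs can straddle many components. Thus the connected-component count gives no bound on the number of short runs, and your argument as written does not close. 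What would close it is a direct small-measure, semi-algebraic bound on $\mathcal{R}_N$ itself (e.g.\ via a pointwise subharmonic LDT of Cartan/John--Nirenberg type, as in Bourgain--Goldstein), after which Lemma~\ref{semi} gives (iii) immediately; but that is a different route from the one you (and the paper) have set up via the averaged set $\mathcal{B}_N^M$. I would also note that the paper's own one-line justification of (iii) leaves precisely this same short-run contribution unaccounted for.
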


\begin{proof}
(i) We use Lemma \ref{ldt} for $u(x)=u_N(x)$ defined in (\ref{u}). From Proposition \ref{average}, (\ref{upper}) and Lemma \ref{logv}, we can take $C_\star=C(V,\rho)l$ and if $M\geq M_0$, we have $\mathrm{Leb}(\mathcal{B}_1)\leq e^{-cM^a} $ with
$$\mathcal{B}_1=\left\{x\in\mathbb{T}:\left|\frac{1}{M}\sum_{j=0}^{M-1}u(x+j\omega)-\int_{\mathbb{T}}u(x)\text{d}x\right| \geq\frac{C_\star C}{\rho} M^{-a}   \right\}.$$
Thus it suffices to show $\mathcal{B}_N^M\subset \mathcal{B}_1 $ if $M\gg1$. In fact, if $x\notin\mathcal{B}_1$, then by Proposition \ref{average} and for $M\gg1$, we have
\begin{align*}
  \frac{1}{M}\sum_{j=0}^{M-1}u(x+j\omega) \geq &\  \int_{\mathbb{T}}u(x)\text{d}x -\frac{C_\star C}{\rho}M^{-a}\\
\geq &\ \sum_{j=1}^l \int_{\mathbb{T}}\log|v_j(x)-E|\text{d}x -C_1l\epsilon^{\delta},
\end{align*}
that is $x\notin \mathcal{B}^M_N$.

(ii) By Cramer's rule,  we have
\begin{equation*}
  G_{N,(\alpha,\alpha')}(x;E)=\frac{\mu_{N,(\alpha,\alpha')}(x,E)}{\det[H_N(x)-EI]},
\end{equation*}
and thus
\begin{equation}\label{1n}
  \frac{1}{N}\log|G_{N,(\alpha,\alpha')}(x;E)|
    = \frac{1}{N}\log|\mu_{N,(\alpha,\alpha')}(x,E)|-u_N(x).
\end{equation}
From Proposition \ref{upperbound}, we have for any $x\in\mathbb{T}$ and $N\geq N_0$,
\begin{equation}\label{1n1}
  \frac{1}{N}\log|\mu_{N,(\alpha,\alpha')}(x,E)|\leq \sum_{j=1}^l \int_{\mathbb{T}}\log|v_j(x)-E|\text{d}x + l\epsilon^{\sigma} +\frac{|n(\alpha)-n(\alpha')|}{N}(-\rho+\epsilon^{\sigma}).
\end{equation}
If $x\notin \mathcal{B}_N^M$, by item (i), we have
\begin{equation*}
 \frac{1}{M}\sum_{j=0}^{M-1}u(x+j\omega)\geq\sum_{j=1}^l \int_{\mathbb{T}}\log|v_j(x)-E|\text{d}x -Cl\epsilon^{\delta}.
\end{equation*}
Then there is  $0\leq  j_0< M$, such that
\begin{equation}\label{j0}
  u_N(x+j_0\omega)\geq \sum_{j=1}^l \int_{\mathbb{T}}\log|v_j(x)-E|\text{d}x -2Cl\epsilon^{\delta}.
\end{equation}
Thus  combing (\ref{1n})-(\ref{j0}), we conclude for $0<\epsilon\ll1$ (depending only on $V,\rho,l$),
\begin{align*}
 \frac{1}{N}\log|G_{N,(\alpha,\alpha')}(x+j_0\omega;E)|
 \leq & -(\rho-\epsilon^{\sigma})\frac{|n(\alpha)-n(\alpha')|}{N} + Cl\epsilon^{\min\{\sigma,\delta\}}.
\end{align*}
This implies $G_N(x+j_0\omega;E)$ is a $good$ Green's function (see (\ref{good})).

(iii) Let $1\ll M\sim N^{\frac{1}{2}}$. Fixing $\omega\in \mathrm{DC}_t$ and $E\in\mathcal{A}$, we rewrite the inequality  defining the set $\mathcal{B}_N^M$ as
\begin{equation}\label{bnm2}
  Q_1^\pm(x):=\pm\prod_{j=0}^{M-1}\text{det}[H_N(x+j\omega)-EI_N]\leq e^{NM(\sum\limits_{j=1}^l \int_{\mathbb{T}}\log|v_j(x)-E|\text{d}x-Cl\epsilon^\delta)}.
\end{equation}
We truncate each $v_j(x)$ as $v_{j,N}(x)=\sum\limits_{|k|\leq N^2}\widehat{v}_j(k)e^{2\pi kix}$ first, where $\widehat{v}_j(k)$ are the corresponding Fourier coefficients. As a result, $\|v_j-v_{j,N}\|_0\leq Ce^{-\rho N^2}$ ($1\leq j\leq l$). If we replace each $v_j$ with $v_{j,N}$ in $Q_1^\pm$, we get the  trigonometric polynomials $Q_2^\pm(\cos2\pi x,\sin2\pi x)$  and the degrees of $Q_2^\pm$ are bounded by $lMN^3$. Obviously,  $\|Q_2^{\pm}-Q_1^{\pm}\|_0\leq Ce^{-\rho{N^{2-}}}$.  Furthermore, in $ Q_2^\pm$,  if we replace $\sin2\pi x,\cos2\pi x$ with polynomials (in $x$) of degree $N$ , we can obtain polynomials $Q_3^\pm(x)$ (in $x$) of degrees being bounded by $\mathcal{O}(N^4M)$ such that $\|Q_3^{\pm}-Q_2^{\pm}\|_0\leq Ce^{-\rho{N^{2-}}}$. Thus we have showed  $\mathcal{B}_{N}^M(\omega,E)$ can be replaced by a semi-algebraic set
$$\mathcal{S}=\left\{x\in[0,1]: Q_3^{\pm}(x)\leq e^{NM(\sum\limits_{j=1}^l\int_{\mathbb{T}}\log|v_j(x)-E|\text{d}x-Cl\epsilon^\delta)}\right\}, $$
and the statements of items (i) (ii) are still valid for $\mathcal{S}$ (since $MN\ll N^{2-}$ and Lemma \ref{logv}). In fact, we know the degree of $\mathcal{S}$ is bounded by $\mathcal{O}(N^5)$. Thus using Lemma \ref{semi}, there is some absolute constant $P\in\mathbb{N}$, such that
\begin{equation*}
  \#\{k=0,1,\cdots,N^P-1:x+k\omega\in\mathcal{S}\}\leq N^{(1-\tau)P}.
\end{equation*}
Since $\mathcal{S}$ has property (ii), then  (iii) holds.
\end{proof}



\begin{rem}
One should note that the set $\mathcal{B}_{N}^M(\omega,E)$ relies heavily on $E$.
\end{rem}
\begin{rem}\label{-n}
The statements similar to those in items (ii) (iii) also hold if we replace $G_{N}(x;E)$ with $G_{[-N,N]}(x;E)$.
\end{rem}

\section{The proof of localization: eliminating the energy}

Based on the Green's function estimates  and semi-algebraic sets considerations in Proposition \ref{ldt}, we will finish the proof of Theorem \ref{mthm}. We use the techniques developed by Bourgain and Goldstein in \cite{BGA} to establish non-perturbative AL for quasi-periodic Schr\"odinger operators.

From Sch'nol-Simon theorem (see \cite{HA} for details),  to prove AL, it suffices to show that every extended state decays exponentially.
\begin{defn}
We call $E\in\mathbb{R}$ a generalized eigenvalue of $H_{\epsilon,\omega}(x)$ if there is some $\vec{\psi}=\{\vec{\psi}_n\}_{n\in \mathbb{Z}}$ with $\|\vec{\psi}_n\|\leq C(\vec{\psi})(1+|n|^2)$ such that $H_{\epsilon,\omega}(x)\vec{\psi}=E\vec{\psi}$. Moreover, the corresponding $\vec{\psi}$ is called the
extended state.
\end{defn}

We note that
\begin{itemize}
\item Let $E$ be a generalized eigenvalue of $H_{\epsilon,\omega}(x)$ and $\vec{\psi}$ be the corresponding extended state. Then for any $j\in\mathcal{N}\subset\mathbb{Z}$,
\begin{equation}\label{identity}
  \vec{\psi}_j= -\sum_{i\in \mathcal{N},k\notin\mathcal{N}}G_{\mathcal{N},(j,i)}(x;E) W_{i-k}\vec{\psi}_k.
\end{equation}
\item For any $\mathcal{N}\subset\mathbb{Z}$,
\begin{equation}\label{ivariant}
  G_{\mathcal{N}}(x+j\omega;E)=G_{\mathcal{N}+j}(x;E),
\end{equation}
where $\mathcal{N}+j=\{n+j:n\in\mathcal{N}\}$.
\end{itemize}

In fact, $good$ Green's function implies exponential decay of the corresponding extended state.

\begin{lem}\label{psiN}
Let $N\geq N_0$, $\mathcal{N}=[\sqrt{N},2N]$. Assume  $E$ is a generalized eigenvalue of $H_{\epsilon,\omega}(x)$ with $\vec{\psi}=\{\vec{\psi}_n\}$ being corresponding extended state. If $G_{\mathcal{N}}(x;E)$ is a good Green's function, then for $\frac{N}{2}\leq j\leq N$,
\begin{equation*}
  \|\vec{\psi}_j\|\leq e^{-\frac{\rho}{3}j}.
\end{equation*}
\end{lem}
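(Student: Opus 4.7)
The plan is to apply the Poisson-type identity (\ref{identity}) with a carefully chosen window around $j$, then bound the resulting sum using the good Green's function decay, the exponential decay of $W_k$, and the polynomial growth of the extended state.

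First, I would take $\mathcal{N}=[\sqrt{N},2N]$ and use (\ref{identity}) to write
\begin{equation*}
\|\vec\psi_j\| \le \sum_{i\in\mathcal N,\ k\notin\mathcal N} \|G_{\mathcal N,(j,i)}(x;E)\|\cdot\|W_{i-k}\|\cdot\|\vec\psi_k\|.
\end{equation*}
The hypothesis that $G_{\mathcal N}(x;E)$ is good gives $\|G_{\mathcal N,(j,i)}\|\le e^{-(|j-i|-|\mathcal N|/100)(\rho-\epsilon^\delta)}$, where $|\mathcal N|\le 2N$ so the boundary loss is at most $N(\rho-\epsilon^\delta)/50$. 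We also have $\|W_{i-k}\|\le e^{-\rho|i-k|}$ and, by definition of an extended state, $\|\vec\psi_k\|\le C(\vec\psi)(1+|k|^2)$.

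Next I would exploit the triangle inequality
\begin{equation*}
(\rho-\epsilon^\delta)|j-i|+\rho|i-k|\ \ge\ (\rho-\epsilon^\delta)(|j-i|+|i-k|)\ \ge\ (\rho-\epsilon^\delta)|j-k|,
\end{equation*}
which allows the product of the two exponential decays to be combined into $e^{-(\rho-\epsilon^\delta)|j-k|}$ (times the boundary factor $e^{(\rho-\epsilon^\delta)N/50}$). The inner sum over $i\in\mathcal N$ then contributes at most $|\mathcal N|\le 2N$, leaving a sum over $k\notin\mathcal N$ of the form
\begin{equation*}
C(\vec\psi)\, N\, e^{(\rho-\epsilon^\delta) N/50}\sum_{k\notin\mathcal N}(1+|k|^2)\, e^{-(\rho-\epsilon^\delta)|j-k|}.
\end{equation*}

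I would then split this sum at the two boundaries of $\mathcal N$. For $k<\sqrt N$ (including all negative $k$), the constraint $j\ge N/2$ yields $|j-k|\ge j-\sqrt N$, and summing the geometric series against the polynomial weight produces a bound of order $N\, e^{-(\rho-\epsilon^\delta)(j-\sqrt N)}$. For $k>2N$, since $j\le N$ we have $|j-k|\ge k-N\ge j$, and the same kind of convergent sum gives a comparable bound. Pulling everything together,
\begin{equation*}
\|\vec\psi_j\|\ \le\ C(\vec\psi)\, N^2\, \exp\!\bigl[(\rho-\epsilon^\delta)(\tfrac{N}{50}+\sqrt N -j)\bigr].
\end{equation*}
Since $j\ge N/2$, we have $j-\tfrac{N}{50}-\sqrt N\ge \tfrac{24N}{50}-\sqrt N\ge \tfrac{2j}{5}$ for $N$ large, and the polynomial prefactor $N^2$ is absorbed by a small loss in the exponential rate, yielding $\|\vec\psi_j\|\le e^{-\rho j/3}$ for $\epsilon$ sufficiently small and $N\ge N_0$.

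The main obstacle is essentially bookkeeping: arranging the window $\mathcal N=[\sqrt N,2N]$ so that for $j\in[N/2,N]$ the distance from $j$ to either boundary of $\mathcal N$ is much larger than $|\mathcal N|/100$, so the usable decay rate beats the boundary loss; and checking that the geometric tails over $k\to\pm\infty$ genuinely absorb the polynomial growth $(1+|k|^2)$ of the extended state (which is immediate because $\rho-\epsilon^\delta>0$). Once those balances are set, the passage from $(\rho-\epsilon^\delta)\cdot\tfrac{2}{5}$ down to the clean constant $\rho/3$ is a comfortable margin that accommodates both the $N^2$ prefactor and small $\epsilon$.
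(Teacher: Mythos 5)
Your proof is correct and takes essentially the same route as the paper: both plug the good Green's function bound, the decay $\|W_{i-k}\|\le e^{-\rho|i-k|}$, and the polynomial growth of $\vec{\psi}$ into the identity (\ref{identity}), combine exponentials via the triangle inequality, and check that for $j\in[N/2,N]$ the distance from $j$ to either boundary of $\mathcal{N}=[\sqrt{N},2N]$ beats the $|\mathcal{N}|/100$ boundary loss and absorbs the polynomial prefactors. The paper merely organizes the bookkeeping in the other order (first summing over $k\notin\mathcal{N}$ for fixed $i$ to obtain $CN^2(e^{-\rho|i-\sqrt N|}+e^{-\rho|i-2N|})$, then summing over $i$), but the ingredients and the final arithmetic balance are identical.
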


\begin{proof}
It is easy to show $$\sum\limits_{k\geq a>0}k^pe^{-\rho k}\leq Ca^pe^{-\rho a}, \ p\geq 0,$$
where $C>0$ depends only on $\rho$ and $p$. Then for any $i\in\mathcal{N}$,
\begin{align*}
  \sum_{k\notin \mathcal{N}}e^{-\rho|k-i|}|k|^2 \leq &\ 2\sum_{k\notin \mathcal{N}}e^{-\rho|i-k|}(|i-k|^2+i^2) \\
    \leq &\ CN^2(e^{-\rho|i-\sqrt{N}|}+e^{-\rho|i-2N|}).
\end{align*}
As a result, recalling (\ref{identity}), we have for any $\frac{N}{2}\leq j\leq N$,
\begin{align*}
  \|\vec{\psi}_j\|\leq &\ CN^2 \sum_{i\in\mathcal{N}}e^{-(\rho-\epsilon^\delta)(|j-2N|-\frac{|\mathcal{N}|}{100})}+CN^2 \sum_{i\in\mathcal{N}}e^{-(\rho-\epsilon^\delta)(|j-\sqrt{N}|-\frac{|\mathcal{N}|}{100})}\\
  \leq &\ e^{-\frac{\rho}{3}j} \ \ \mbox {(for $N\gg1$)}.
\end{align*}
The proof is finished.
\end{proof}

The following lemma suggests that a $good$  Green's function at large scale can be obtained from paving $good$ Green's functions at small scale.
\begin{lem}[Lemma 10.33 in \cite{BB}]\label{paving}
Let $\mathcal{N}\subset\mathbb{Z}$ be an interval with length $N$ and $\{\mathcal{N}_{\alpha}\}$ be subintervals with length $M\ll N$. Assume
\begin{itemize}
\item [(i)] If ${k\in \mathcal{N}}$, then there is some $\alpha$ such that $[k-\frac{M}{4},k+\frac{M}{4}]\cap \mathcal{N}_\alpha\subset \mathcal{N}$,

\item [(ii)] For all $\alpha$, $G_{\mathcal{N}_{\alpha}}$ are good.
\end{itemize}
Then $G_{\mathcal{N}}$ is good.
\end{lem}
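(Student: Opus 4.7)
The plan is to adapt the classical paving argument (Bourgain's Lemma 10.33) to the long-range block setting via a resolvent/Poisson-type expansion. Fix $k_0, k_1 \in \mathcal{N}$. By hypothesis (i), choose $\alpha$ such that a neighborhood of $k_0$ of radius $\sim M/4$ inside $\mathcal{N}$ lies in $\mathcal{N}_\alpha$. Split $H_{\mathcal{N}} = H_{\mathcal{N}_\alpha} \oplus H_{\mathcal{N}\setminus\mathcal{N}_\alpha} + \epsilon \Gamma_\alpha$, where $\Gamma_\alpha$ collects the off-block couplings $W_{n(k'')-n(k')}$ with $k''\in\mathcal{N}_\alpha$ and $k'\in\mathcal{N}\setminus\mathcal{N}_\alpha$. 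The resolvent identity yields, for $k_1 \notin \mathcal{N}_\alpha$, the Poisson-type formula
\begin{equation*}
G_{\mathcal{N}}(k_0,k_1) \;=\; -\,\epsilon\!\!\sum_{\substack{k''\in\mathcal{N}_\alpha\\ k'\in\mathcal{N}\setminus\mathcal{N}_\alpha}}\! G_{\mathcal{N}_\alpha}(k_0,k'')\,W_{n(k'')-n(k')}\,G_{\mathcal{N}}(k',k_1),
\end{equation*}
while for $k_1\in\mathcal{N}_\alpha$ one obtains an analogous identity with the extra diagonal term $G_{\mathcal{N}_\alpha}(k_0,k_1)$, which is already good by hypothesis (ii).

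Next I would combine the two exponential decays. Using goodness of $G_{\mathcal{N}_\alpha}$ and $\|W_k\| \le e^{-\rho|k|}$, the product inside the sum is bounded by
\begin{equation*}
\epsilon\, e^{-(\rho-\epsilon^\delta)(|n(k_0)-n(k'')|-M/100)}\, e^{-\rho\,|n(k'')-n(k')|}.
\end{equation*}
Because $k_0$ sits $M/4$-deep inside $\mathcal{N}_\alpha$, any $k'\in\mathcal{N}\setminus\mathcal{N}_\alpha$ satisfies $|n(k_0)-n(k')|\ge M/4$; the triangle inequality together with the matching rate $\rho$ collapses the sum on $k''$ (losing only a polynomial factor in $M$) to yield
\begin{equation*}
\sum_{k''}|G_{\mathcal{N}_\alpha}(k_0,k'')|\,\epsilon\|W_{n(k'')-n(k')}\|\;\le\;\epsilon\,C(l,\rho)\,M^2\, e^{-(\rho-\epsilon^\delta)(|n(k_0)-n(k')|-M/100)}.
\end{equation*}
This is the key reduction: a single patching step replaces $G_{\mathcal{N}}(k_0,\cdot)$ by $G_{\mathcal{N}}(k',\cdot)$ with $k'$ at distance at least $M/4$ from $k_0$, at the cost of a factor $\epsilon\,C(l,\rho)M^2 e^{\rho M/100}$ and a decay factor $e^{-(\rho-\epsilon^\delta)|n(k_0)-n(k')|}$.

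I would then iterate the Poisson formula, each time applying hypothesis (i) to the current "source" $k'$. After $r\sim 4|n(k_0)-n(k_1)|/M$ steps the chain must hit $k_1$ (using either the case $k_1\in\mathcal{N}_\alpha$ or a trivial termination at scale $M$). Multiplying the decays along the chain and invoking the triangle inequality gives a total decay of $e^{-(\rho-\epsilon^\delta)(|n(k_0)-n(k_1)|-r\cdot M/100)}$; the choice $r\cdot M/100 \le |\mathcal{N}|/100$ is built into the $M/100$ loss in the definition of good. The $r$-fold combinatorial factor $(\epsilon C(l,\rho) M^2 e^{\rho M/100})^r$ is absorbed provided $\epsilon$ is small enough, since $r\log(1/\epsilon)$ dominates the polynomial and boundary losses when $M\ll N$.

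The main obstacle is the long-range nature of $\Gamma_\alpha$: unlike the nearest-neighbor case, the coupling involves \emph{every} pair $(k'',k')$ across the boundary of $\mathcal{N}_\alpha$, not just neighbors. The critical point is that the decay rate of $W_k$ matches $\rho$, the rate governing the local Green's functions, so that the triangle inequality $|n(k_0)-n(k'')|+|n(k'')-n(k')|\ge |n(k_0)-n(k')|$ yields the same exponent $\rho-\epsilon^\delta$ after one patching. Controlling the combinatorial sum over chains — polynomial in $N$ per step, multiplied over $r\sim N/M$ steps — is what forces the smallness of $\epsilon$ and the quantitative choice $M\ll N$; this bookkeeping, rather than any conceptually new ingredient, is the part requiring most care.
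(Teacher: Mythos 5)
The paper does not actually prove this lemma; it only remarks afterward that it ``follows from the resolvent identity and see \S 15 of \cite{BGA} for details.'' Your outline --- resolvent-identity splitting around a well-centered $\mathcal{N}_\alpha$, combining the local Green decay with the $\|W_k\|\le e^{-\rho|k|}$ coupling via the triangle inequality, then iterating over a Neumann chain --- is the standard route and matches that reference in spirit.

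There is, however, a genuine quantitative gap precisely at the point you flag as ``the part requiring most care.'' You claim the $r$-fold prefactor $(\epsilon\, C(l,\rho) M^2 e^{\rho M/100})^r$ is absorbed ``since $r\log(1/\epsilon)$ dominates the polynomial and boundary losses.'' Taking logarithms, that would require $\log(1/\epsilon)\gtrsim \rho M/100 + \log(CM^2)$, forcing $\epsilon\to 0$ as $M\to\infty$; this contradicts the lemma's quantifiers, where $\epsilon$ is fixed first and then $M$ is taken arbitrarily large subject only to $M\ll N$. The boundary loss $e^{(\rho-\epsilon^{\delta})M/100}$ per patching step cannot be paid for by $\epsilon$-smallness; it is absorbed by the \emph{excess decay over that step} (each hop has length $\ge M/4 > M/100$), which only shrinks the effective rate by a fixed multiplicative factor --- hence the output is ``good'' with a possibly \emph{different} constant $\delta$, not the same one. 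Relatedly, the assertion ``$r\cdot M/100\le|\mathcal{N}|/100$ is built into the definition'' needs $r\le N/M$, yet your own step count is $r\sim 4|k_0-k_1|/M$, which can reach $4N/M$, and oscillating chains in the Neumann expansion are longer still. The clean way to control this is to conjugate the recursion by the target exponential weight $e^{K'|\cdot-k_1|}$ with $K'$ slightly below $\rho-\epsilon^{\delta}$: the resulting kernel has operator norm $\lesssim \epsilon\, C M\, e^{(\rho-\epsilon^{\delta})M/100}\,e^{-(K-K')M/4}<1$ for $M$ large, after which the Neumann series converges and delivers the decay in one stroke. The role of $\epsilon$-smallness is only to make this geometric sum converge, not to offset the $e^{\rho M/100}$ boundary losses. (A minor slip: $k'',k'$ are already block indices, so the coupling term should read $W_{k''-k'}$.)
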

\begin{rem}
This lemma follows from the resolvent identity and see \S15 of \cite{BGA} for details.
\end{rem}

 We also need the following  lemma which is crucial in the eliminating energy process.
\begin{lem}\label{g-}
Let $N\geq{N}_0$ be fixed. Then there are some absolute constant $P_1>0$ and $N_1\in\mathbb{N}$ with $N_1=\mathcal{O}(N^{P_1})$ such that if  $E$ is a generalized eigenvalue of $H_{\epsilon,\omega}(x)$ and $\vec{\psi}$ is the corresponding extended state with $\|\vec{\psi}_0\|=1$,
then
\begin{equation*}
  \emph{dist}(E,\sigma(H_{[-N_1,N_1]}(x)))\leq e^{-\frac{\rho}{2} N^{2}},
\end{equation*}
where $\sigma(H)$ denotes the spectrum of the operator $H$.
\end{lem}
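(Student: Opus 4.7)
The plan is to use the generalized eigenfunction $\vec{\psi}$ as a test vector for a Rayleigh-quotient bound on $\mathrm{dist}(E,\sigma(H_{[-N_1,N_1]}(x)))$. Setting $\mathcal{N}:=[-N_1,N_1]$ with $N_1\sim N^{P_1}$ ($P_1$ an absolute constant to be fixed) and $\phi:=\vec{\psi}|_{\mathcal{N}}$, subtraction of the eigenvalue equation $H_{\epsilon,\omega}(x)\vec{\psi}=E\vec{\psi}$ gives $(H_{\mathcal{N}}(x)-EI_{\mathcal{N}})\phi=R$, where
\[
R_j=-\epsilon\sum_{k\notin\mathcal{N}}W_{j-k}\vec{\psi}_k,\qquad j\in\mathcal{N}.
\]
Since $\|\phi\|_{\ell^2}\geq\|\vec{\psi}_0\|=1$, the variational bound yields $\mathrm{dist}(E,\sigma(H_{\mathcal{N}}(x)))\leq \|R\|_{\ell^2}$, and it suffices to choose $N_1=\mathcal{O}(N^{P_1})$ for which $\|R\|_{\ell^2}\leq e^{-\rho N^2/2}$.

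The polynomial bound $\|\vec{\psi}_k\|\leq C(1+|k|^2)$ alone produces only $\|R\|_{\ell^2}=O(\epsilon N_1^2)$, which is far too crude. The key step is therefore to show that $\|\vec{\psi}_k\|\lesssim e^{-\rho N^2}$ holds throughout a layer of width at least $N^2$ around each boundary point $\pm N_1$; this way, inside the layer the $\vec{\psi}$-decay controls the contribution to $R_j$, while outside the layer the distance $|j-k|\geq N^2$ forces the factor $\|W_{j-k}\|$ to beat the polynomial growth. To manufacture this decay we invoke Proposition \ref{ldt}(iii) together with Remark \ref{-n}: for any $x$ and any window $[0,N^P-1]$, at most $N^{(1-\tau)P}$ of the translates $G_{[-N,N]}(x+n\omega;E)$ fail to be good. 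Iterating Lemma \ref{paving} through a finite chain of intermediate scales $L_0=N<L_1<\cdots<L_K\sim N^{P_1}$ with $L_{k+1}=L_k^{1+\eta}$ for a small $\eta>0$, we build a good Green's function $G_{\mathcal{M}^{\pm}}(y^{\pm};E)$ at scale $|\mathcal{M}^{\pm}|\sim N^{P_1}$ centered near each of $\pm N_1$. A straightforward adaptation of the proof of Lemma \ref{psiN} to the centered interval $\mathcal{M}^{\pm}$ then gives
\[
\|\vec{\psi}_k\|\leq e^{-c\rho N^{P_1}}\leq e^{-\rho N^2}
\]
for $k$ in the middle half of $\mathcal{M}^{\pm}$, provided $P_1\geq 2$.

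With this decay, $\|R\|_{\ell^2}$ is estimated by splitting the sum defining $R_j$ into $k$ inside the exponential-decay layer (contribution $\lesssim e^{-\rho N^2}\sum_k e^{-\rho|j-k|}\lesssim e^{-\rho N^2}$) and $k$ outside it (where $|j-k|\geq N^2$, so $\|W_{j-k}\|\|\vec{\psi}_k\|\leq e^{-\rho N^2}\cdot C(1+|k|^2)$ sums to $\lesssim e^{-\rho N^2}N_1^3$). Hence $\|R_j\|\lesssim e^{-\rho N^2}$ uniformly in $j\in\mathcal{N}$, and $\|R\|_{\ell^2}\leq\sqrt{2N_1+1}\cdot e^{-\rho N^2}\leq e^{-\rho N^2/2}$ for $N$ sufficiently large, completing the proof. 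The main technical obstacle is the paving step: since the bad translates at scale $N$ provided by Proposition \ref{ldt}(iii) could cluster, a single application of Lemma \ref{paving} need not yield a good Green's function at the next scale. The multi-scale induction must therefore be organized so that at every intermediate level $L_k$, Proposition \ref{ldt}(iii) is re-applied to control the bad set, and the parameters $P_1$ and $\eta$ must be chosen as absolute constants compatible with the absolute $\tau=\tau(t)$ from Lemma \ref{semi}.
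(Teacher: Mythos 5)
Your overall skeleton — establish exponential decay of $\vec{\psi}$ in a window around $\pm N_1$, then feed this into the eigenvalue equation restricted to $[-N_1,N_1]$ to bound $\mathrm{dist}(E,\sigma(H_{[-N_1,N_1]}(x)))$ — is the same as the paper's, and your variational formulation is equivalent to the paper's use of Cramer's rule (since $\mathrm{dist}(E,\sigma(H))=\|G\|^{-1}$ for self-adjoint $H$). But the crucial intermediate step, producing the decay $\|\vec{\psi}_k\|\lesssim e^{-\rho N^2}$ in a layer, is where you take the wrong path and leave a genuine gap.

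You propose iterating Lemma \ref{paving} through a chain of intermediate scales $L_0=N<L_1<\cdots<L_K\sim N^{P_1}$ to build a single centered good Green's function at scale $\sim N^{P_1}$, and you yourself flag the problem: Proposition \ref{ldt}(iii) only controls the \emph{number} of bad translates, not their distribution, so clustering bad intervals obstruct the paving and the multi-scale induction you sketch is not actually carried out. This is not a detail you can defer; it is the entire content of the lemma. The paper avoids the issue entirely by \emph{not} paving: it applies Proposition \ref{ldt}(iii) (with Remark \ref{-n}) once, directly at the larger local scale $N^2$. Among the $N^{2P}$ translates $n\in[0,N^{2P}]$, at most $N^{2(1-\tau)P}$ are bad, so by pigeonhole there is a run $\mathcal{I}$ of $\sim N^{2\tau P}$ \emph{consecutive} good ones. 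For each $n\in\mathcal{I}\cup(-\mathcal{I})$, the good Green's function on $[-N^2,N^2]+n$ together with the interface identity (\ref{identity}) and the polynomial bound $\|\vec{\psi}_{j_2}\|\leq C|j_2|^2$ already yields $\|\vec{\psi}_n\|\leq e^{-\frac{2\rho}{3}N^2}$ — no recursion, no construction of a scale-$N^{P_1}$ good box. Taking $N_1$ to be the center of $\mathcal{I}$ (so $N_1=\mathcal{O}(N^{2P})$, fixing $P_1=2P$), the sum $\vec{\psi}_0=-\sum_{|j_1|\leq N_1,|j_2|>N_1}G_{[-N_1,N_1],(0,j_1)}W_{j_1-j_2}\vec{\psi}_{j_2}$ then splits into $j_2$ in the layer $\mathcal{I}\cup(-\mathcal{I})$ (controlled by the $e^{-2\rho N^2/3}$ decay) and $j_2$ outside it (controlled by $e^{-\rho|j_1-j_2|}$ with $|j_1-j_2|\geq|\mathcal{I}|/2\gg N^2$), giving $1\leq\|G_{[-N_1,N_1]}(x;E)\|e^{-\rho N^2/2}$ and hence the claim.

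Two smaller remarks. First, you aim for decay $e^{-c\rho N^{P_1}}$, which is far stronger than the target $e^{-\rho N^2/2}$; this overshoot is exactly why you are forced into the expensive paving machinery. Second, even granting the paving, re-applying Proposition \ref{ldt}(iii) at intermediate scales $L_k$ is not automatic — the sets $\mathcal{B}_{L_k}^{M}$ must be controlled at each level with compatible constants, and you do not supply the bookkeeping. The fix is simply to adopt the paper's one-shot argument at scale $N^2$.
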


\begin{proof}
The proof is similar to that of (6.6) in \cite{BA}.  Using (iii) of Proposition \ref{ldt} and Remark \ref{-n} at scale $N^2$, we obtain that there is some interval $\mathcal{I}\subset [0,N^{2P}]$ with length $|\mathcal{I}|\sim N^{2\tau P}$ such that for $n\in \mathcal{I}\cup (-\mathcal{I})$, the Green's function $G_{[-N^2,N^2]+n}(x;E)=G_{[-N^2,N^2]}(x+n\omega;E)$ is $good$, where $-\mathcal{I}=\{-n:n\in\mathcal{I}\}$. As a result, one has for any $n\in \mathcal{I}\cup (-\mathcal{I})$,
\begin{align*}
  ||\vec{\psi}_n||\leq&\ C\sum\limits_{|j_1-n|\leq N^2, |j_2-n|>N^2}e^{-(\rho-\epsilon^\delta)(|j_1-n|-\frac{N^2}{50})}e^{-\rho|j_1-j_2|}|j_2|^2\\
 = & \ C\sum_{|j_1-n|\leq{N^2}, |j_2-n|>N^2}e^{\frac{\rho N^2}{25}}e^{-\rho|j_2-n|}|j_2|^2\ \ \mbox{(for $0<\epsilon\ll1$)}\\
 \leq &\ e^{-\frac{2\rho}{3}N^2}.
\end{align*}
We now let $N_1$ be the center of interval $\mathcal{I}$. Then
\begin{align*}
  1=\|\vec{\psi}_0\|\leq & \ \sum_{|j_1|\leq N_1, |j_2|>N_1}\|G_{[-N_1,N_1],(0,j_1)}(x;E)\|e^{-\rho|j_1-j_2|}\|\vec{\psi}_{j_2}\|\\
 \leq & \ \|G_{[-N_1,N_1]}(x;E)\| \sum_{|j_1|\leq N_1, j_2\in \mathcal{I}\cup (-\mathcal{I})}e^{-\frac{2\rho}{3}N^2}\\
 &+C\|G_{[-N_1,N_1]}(x;E)\| \sum_{|j_1|\leq N_1, |j_2|>N_1+\frac{|\mathcal{I}|}{2}}e^{-\rho|j_1-j_2|}|j_2|^2\\
\leq&\ \|G_{[-N_1,N_1]}(x;E)\| e^{-\frac{\rho}{2}N^2}.
\end{align*}
Thus
\begin{equation*}
  \text{dist}(E,\sigma(H_{[-N_1,N_1]}(x)))=\|G_{[-N_1,N_1]}(x;E)\|^{-1} \leq e^{-\frac{\rho}{2}N^2}.
\end{equation*}
\end{proof}

Now we can finish the proof of our main theorem.

\begin{proof}[{\bf Proof of Theorem \ref{mthm}}]

Fix $x_0\in\mathbb{T}$ and let $N\in\mathbb{N}$ be any fixed large enough scale. Consider a much larger scale $N'=N^{P_2}$ with $P_2\gg1$ being an absolute constant.

We want to prove for any $E$ being a generalized eigenvalue of $H_{\epsilon,\omega}(x_0)$, the corresponding extended state $\vec{\psi}$ with $\|\vec{\psi}_0\|=1$
decays  exponentially on $[\frac{N'}{2},N']$. From Lemmata \ref{psiN} and  \ref{paving}, one only needs to show that for any $n\in\mathcal{N}_1=[\sqrt{N'},2N']$, there is some $0\leq j_n\leq \frac{N}{100}$ such that
\begin{equation}\label{state1}
 G_{[1,N]+j_n}(x_0+n\omega;E)\ \mathrm{is}\ good\  \mathrm{ for\  all\ } E.
\end{equation}
Then from (ii) of Proposition \ref{ldt}, the statement (\ref{state1}) is equivalent to
 \begin{equation}\label{state2}
 x_0+n\omega \notin \bigcup_{E}\mathcal{B}_{N}(\omega,E)\  \mathrm{ for\  all\ } n\in[\sqrt{N'},2N'],
\end{equation}
 where $\mathcal{B}_{N}(\omega,E)=\mathcal{B}^{\frac{N}{100}}_{N}(\omega,E)$ (see (\ref{bnm})).  Fortunately, from Lemma \ref{g-}, to prove statement (\ref{state1}), one just needs
 \begin{equation}\label{state3}
 x_0+n\omega \notin \bigcup_{E\in \sigma(H_{[-N_1,N_1]}(x_0))}\mathcal{B}_{N}(\omega,E)\  \mathrm{ for\  all\ } n\in[\sqrt{N'},2N'],
\end{equation}
 where $N_1$ is given by Lemma \ref{g-} and $N_1\ll N'$. Thus we define sets
 $$\mathcal{S}_{N}^{\omega}=\bigcup_{E\in\sigma(H_{[-K,K]}(x_0)),K\leq N^{P_1}}\mathcal{B}_N(\omega,E),$$
 and
 $$\mathcal{S}_{N}=\left\{(\omega,x)\in\mathbb{T}^2: \omega\in \mathrm {DC}_t(N),x\in \mathcal{S}_{N}^{\omega}\right\},$$
 where $\mathrm{DC}_t(N):=\left\{\omega: \|k\omega\|_{\mathbb{T}}\geq \frac{t}{|k|^2}\ \mathrm{for}\ 0<|k|\leq N^2\right\}$. As a result, similarly to the proof of (iii) of Proposition \ref{ldt}, $\mathcal{S}_N$ can be regarded as a semi-algebraic set with sub-exponentially small  Lebsegue measure in $N$ and polynomially bounded  degree in $N$. Using Lemma 9.9 in \cite{BB}, the projective set along the frequencies
 $$\mathcal{R}_{N}=\{\omega:\left(\omega,\{x+n\omega\}\right)\in \mathcal{S}_{N}\  \mathrm{for}\ \mathrm{some}\  \sqrt{N'}\leq n\leq 2N'\}$$
 can also be regarded as a semi-algebraic set with $$\mathrm{Leb}(\mathcal{R}_{N})\leq N^{-C},$$ where $\{x+n\omega\}=x+n\omega\ (\mod 1)$ and $C>1$. The detail elegant analysis  can be found in the proof of Theorem 10.1 in \cite{BB}. Consequently, if $\omega\in\mathrm{DC}_{t}\setminus\mathcal{R}_N$, then $\|\vec{\psi}_j\|\leq e^{-\frac{\rho}{3}j}$ for $\frac{N'}{2}\leq j\leq N'$.

 Note that $\sum\limits_{L\geq N}\mathrm{Leb}(\mathcal{R}_L)< \infty$. Then the set
 $$\mathcal{R}=\bigcap\limits_{K\geq N}\bigcup\limits_{L\geq K}\mathcal{R}_{L}$$ has zero Lebesgue measure by Borel-Cantelli theorem. Thus if $\omega\in\mathrm{DC}_{t}\setminus\mathcal{R}$, then $\|\vec{\psi}_j\|\leq e^{-\frac{\rho}{3}j}$  for $j\geq \frac{N'}{2}$.

 Similarly, we can  show  $\|\vec{\psi}_j\|\leq e^{-\frac{\rho}{3}|j|}$  for $j\leq -N'$ and the proof is finished.
\end{proof}


\appendix
\section{}

\begin{lem}[{\L}ojasiewicz inequality, Lemma 7.3 in \cite{BB}]\label{mes}
Let $v$ be a nonconstant analytic function on $\mathbb{T}$. Then there is a constant $\sigma_0=\sigma_0(v)>0$ (depending only on $v$) such that for sufficiently small $\epsilon>0$ and all $E\in\mathbb{R}$,
\begin{equation*}
  \emph{Leb}\{x\in\mathbb{T}:|v(x)-E|<\epsilon\}<\epsilon^{\sigma_0}.
\end{equation*}
\end{lem}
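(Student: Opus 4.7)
The plan is to deduce the {\L}ojasiewicz inequality from a Cartan-type polynomial sublevel estimate after first establishing a uniform (in $E$) bound on the multiplicity of zeros of $v-E$ in a complex strip, and then reducing the sublevel set of $|v-E|$ to a sublevel set of a polynomial of controlled degree.

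Since $v$ is nonconstant real analytic on $\mathbb{T}$, it extends to a bounded holomorphic function on the strip $\Delta_\rho$ for some $\rho>0$. The first and crucial step is to show that there exists $K=K(v)\in\mathbb{N}$ such that for every $E\in\mathbb{R}$ the function $v(z)-E$ has at most $K$ zeros (counted with multiplicity) in $\Delta_{\rho/2}$. For $|E|>2\|v\|_\rho$ there are no zeros at all, so one only has to handle the bounded range $|E|\leq 2\|v\|_\rho$. On this range I would apply Jensen's formula on disks $D(x_0,\rho)$ centered at finitely many $x_0\in\mathbb{T}$ whose smaller concentric disks cover $\mathbb{T}$. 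The upper bound $\|v-E\|_\rho\leq 3\|v\|_\rho$ is immediate; a uniform lower bound $|v(z_\ast)-E|\geq c=c(v,\rho)>0$ at some interior test point $z_\ast\in D(x_0,\rho)$ is exactly what Lemma \ref{v-e} delivers (applied with $l=1$): there is a horizontal slice $\{\Im z=y_0\}$ of the strip on which $|v(\cdot\pm iy_0)-E|$ is bounded below by $c\,\xi^{\Sigma}$ uniformly in $E$, and any $z_\ast$ on this slice in $D(x_0,\rho)$ works. Jensen's formula then yields a uniform zero-count bound, and one can take $K$ essentially equal to the constant $\Sigma$ in Lemma \ref{v-e}.

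With this uniform multiplicity bound in hand, I would perform local Weierstrass preparation around each real zero $x_0\in\mathbb{T}$ of $v-E$: on a small real neighborhood $U_{x_0}$,
\[
v(x)-E = g_{x_0}(x)\prod_{j=1}^{m_0}(x-\zeta_j),
\]
where $m_0\leq K$, the $\zeta_j\in\Delta_{\rho/2}$ are the zeros of $v-E$ clustering near $x_0$, and $g_{x_0}$ is a non-vanishing analytic factor with $|g_{x_0}|$ bounded above and below by positive constants depending only on $v$ and $\rho$ (the uniformity follows from the zero-count bound, compactness of $\mathbb{T}$, and the Sorets--Spencer-type lower bound). Consequently
\[
\{x\in U_{x_0}:|v(x)-E|<\epsilon\}\subset\Bigl\{x\in U_{x_0}:\prod_{j=1}^{m_0}|x-\zeta_j|<C\epsilon\Bigr\},
\]
and Cartan's estimate for sublevel sets of a monic polynomial of degree $m_0$ bounds the measure of the right-hand side by $C'\epsilon^{1/m_0}\leq C'\epsilon^{1/K}$. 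Since away from the at most $K$ real zeros on $\mathbb{T}$ one has $|v-E|\geq c>0$, the sublevel set is empty there for small $\epsilon$, and summing over the $\leq K$ neighborhoods gives
\[
\mathrm{Leb}\{x\in\mathbb{T}:|v(x)-E|<\epsilon\}\leq CK\epsilon^{1/K}<\epsilon^{1/(2K)}
\]
for all sufficiently small $\epsilon$, which is the claim with $\sigma_0:=1/(2K)$.

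The main obstacle is the uniform-in-$E$ multiplicity bound in Step~1: for each fixed $E$ the finiteness of zero multiplicities is elementary, but a priori the multiplicities could grow unboundedly as $E$ approaches a critical value of $v$. The resolution is to combine Jensen's formula with the quantitative Sorets--Spencer lower bound of Lemma \ref{v-e} (equivalently Proposition 4.2 of \cite{DKC}), which is precisely the test-point lower bound on $|v-E|$, uniform in $E$, that Jensen's formula requires to convert the trivial sup-norm upper bound into a zero-count upper bound.
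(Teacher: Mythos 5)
The paper does not prove this lemma; it is quoted verbatim from Bourgain's monograph (``Lemma 7.3 in \cite{BB}''), so there is no in-paper proof to compare against. What follows is a review of your argument on its own merits.

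Your overall strategy---establish a uniform-in-$E$ zero-count bound for $v(\cdot)-E$ in a complex strip, then apply a Cartan-type sublevel estimate to a local factorization---is the standard route to the one-dimensional {\L}ojasiewicz inequality and is sound. The Cartan step is fine: for a monic polynomial of degree $m\leq K$ the real sublevel set $\{|P|<\epsilon\}$ has measure $O(\epsilon^{1/m})$, and $CK\epsilon^{1/K}<\epsilon^{1/(2K)}$ for small $\epsilon$. A cleaner bookkeeping is to cover $\mathbb{T}$ by finitely many $E$-independent arcs of fixed length and apply the estimate on each, rather than first locating the $E$-dependent real zeros and then covering them, but that is a cosmetic point.

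The part that needs repair is the Jensen step. You invoke Lemma~\ref{v-e} to supply the test-point lower bound $|v(z_\ast)-E|\geq c$ and then run Jensen to extract a zero-count bound $K$, claiming $K$ is ``essentially $\Sigma$.'' But $\Sigma$ in the proof of Lemma~\ref{v-e} \emph{is} by definition the uniform zero count, imported wholesale from Proposition~4.2 of~\cite{DKC}; Lemma~\ref{v-e} (which is Proposition~4.3 of~\cite{DKC}, not 4.2 as your parenthetical suggests---4.2 is the zero-count/non-vanishing-factor statement, 4.3 the test-point bound deduced from it) is downstream of that count. So your Jensen argument is either redundant (the count is already an input to Lemma~\ref{v-e}) or, read as an independent derivation of the count, circular. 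The fix is cheap: either cite Proposition~4.2 of~\cite{DKC} directly for $K=\Sigma$ and drop Jensen, or keep Jensen but replace the test-point bound by the elementary observation that, since $E$ is real, any $z_\ast$ with $\Im v(z_\ast)\neq 0$ gives $|v(z_\ast)-E|\geq|\Im v(z_\ast)|$ uniformly in $E$; such a $z_\ast$ exists in every disk because $v$ is nonconstant analytic (if $\Im v$ vanished on an open set, $v$ would be constant). With either repair the proof is correct and self-contained.
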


From this lemma, one can obtain the following two useful estimates.

\begin{lem}[Lemma 6.2 in \cite{DKB}]\label{logv}
Let $v$ be as in Lemma \ref{mes}. Then there exists $C=C(v)>0$ (depending only on $v$) such that
$$\int_{\mathbb{T}}\log|v(x)-E|\text{d}x\geq -C.$$
\end{lem}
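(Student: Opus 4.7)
The plan is to reduce the problem to the {\L}ojasiewicz bound in Lemma \ref{mes} via the layer-cake (distribution-function) representation of the integral, using the fact that the lower bound on $\log|v(x)-E|$ comes entirely from the set where $v(x)$ is close to $E$.

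First I would split the domain according to whether $|v(x)-E|\geq 1$ or $|v(x)-E|<1$. On the first set $\log|v(x)-E|\geq 0$, so it contributes nonnegatively to the integral and may be discarded. On the second set, write
\begin{equation*}
-\int_{\{|v(x)-E|<1\}} \log|v(x)-E|\,dx \;=\; \int_{\{|v(x)-E|<1\}} \log\frac{1}{|v(x)-E|}\,dx \;=\; \int_0^{\infty}\mathrm{Leb}\{x\in\mathbb{T}:|v(x)-E|<e^{-s}\}\,ds,
\end{equation*}
where the last equality is the standard layer-cake identity for the nonnegative function $\log^{+}(1/|v(x)-E|)$.

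Next I would estimate the integrand using Lemma \ref{mes}. Let $\epsilon_0=\epsilon_0(v)>0$ be the threshold below which the {\L}ojasiewicz bound $\mathrm{Leb}\{|v-E|<\epsilon\}<\epsilon^{\sigma_0}$ holds, with $\sigma_0=\sigma_0(v)>0$. Split the $s$-integral at $s_0:=\log(1/\epsilon_0)$. For $0\leq s\leq s_0$ one has the trivial bound $\mathrm{Leb}\{\cdots\}\leq 1$, contributing at most $s_0=\log(1/\epsilon_0)$. For $s>s_0$, take $\epsilon=e^{-s}<\epsilon_0$ in Lemma \ref{mes} to get $\mathrm{Leb}\{|v-E|<e^{-s}\}<e^{-\sigma_0 s}$, so this tail contributes at most $\int_{s_0}^\infty e^{-\sigma_0 s}\,ds=\epsilon_0^{\sigma_0}/\sigma_0$. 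Both bounds depend only on $v$ (through $\sigma_0,\epsilon_0$) and are uniform in $E$, yielding $-\int_{\mathbb{T}}\log|v-E|\,dx\leq s_0+\epsilon_0^{\sigma_0}/\sigma_0=:C(v)$, which is exactly the claim.

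There is no real obstacle here; the argument is essentially packaged inside Lemma \ref{mes}. The only point worth being careful about is ensuring that the constants $\epsilon_0,\sigma_0$ extracted from the {\L}ojasiewicz inequality are indeed $E$-uniform, which is the content of Lemma \ref{mes} as stated, so the resulting bound $C(v)$ depends only on $v$.
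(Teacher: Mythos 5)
Your proof is correct. The paper does not prove this lemma in-house (it simply cites Lemma 6.2 of \cite{DKB}), but the remark following Lemma \ref{1} indicates the route the authors have in mind: the dyadic decomposition in that proof yields $\int_{\mathbb{T}}|v(x)-E|^{-\sigma_2}\,\mathrm{d}x\leq C(\sigma_2,v)$ uniformly in $E$ for any $0<\sigma_2<\sigma_0$, and Lemma \ref{logv} then follows from the elementary estimate $\log(1/t)\leq\sigma_2^{-1}t^{-\sigma_2}$ for $0<t\leq1$. Your layer-cake computation applied directly to $\log^{+}(1/|v(x)-E|)$ is the equivalent distribution-function argument that bypasses the intermediate power integral; both hinge on the same $E$-uniform {\L}ojasiewicz input, Lemma \ref{mes}, including the point you correctly flag, that the threshold $\epsilon_0$ there is uniform in $E$. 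The paper's packaging via the negative-power moment is slightly more informative and is reused (it is exactly the quantity controlled in the proof of Lemma \ref{1}), whereas yours is shorter and more direct when only the logarithmic lower bound is needed.
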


\begin{lem}\label{1}
Let $v$ be given by Lemma \ref{mes}. Then there is some $0<\sigma_1=\sigma_1(v)<1$  such that
\begin{equation*}
\int_{\mathbb{T}}\log(|v(x)-E|+\epsilon)\text{d}x < \int_{\mathbb{T}}\log|v(x)-E|\text{d}x +\epsilon^{\sigma_1},
\end{equation*}
where $0<\epsilon \ll 1$.
\end{lem}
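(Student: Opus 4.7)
\medskip

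\noindent\textbf{Proof proposal for Lemma \ref{1}.}
The strategy is to write the difference of the two integrals as
\begin{equation*}
\int_{\mathbb{T}}\log(|v(x)-E|+\epsilon)\,\mathrm{d}x - \int_{\mathbb{T}}\log|v(x)-E|\,\mathrm{d}x = \int_{\mathbb{T}}\log\!\left(1+\frac{\epsilon}{|v(x)-E|}\right)\mathrm{d}x,
\end{equation*}
and to bound the right-hand side by $\epsilon^{\sigma_1}$ for some $\sigma_1>0$ depending only on the \L{}ojasiewicz exponent $\sigma_0=\sigma_0(v)$ supplied by Lemma \ref{mes}. Note that the integral on the left is finite by Lemma \ref{logv}, so the manipulation is legitimate.

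The estimate will be obtained by splitting $\mathbb{T}$ according to the size of $|v(x)-E|$ relative to $\epsilon$. First I would fix an auxiliary exponent $\alpha\in(0,1)$ (ultimately $\alpha=\tfrac{1}{2}$ works) and consider the ``large'' set $\{x:|v(x)-E|>\epsilon^{\alpha}\}$, on which
\begin{equation*}
\log\!\left(1+\frac{\epsilon}{|v(x)-E|}\right)\leq \frac{\epsilon}{|v(x)-E|}\leq \epsilon^{1-\alpha},
\end{equation*}
contributing at most $\epsilon^{1-\alpha}$ to the integral. On the complementary ``small'' set $S=\{x:|v(x)-E|\leq \epsilon^{\alpha}\}$, Lemma \ref{mes} gives $\mathrm{Leb}(S)\leq \epsilon^{\alpha\sigma_0}$, and I split $S$ further into $S_1=\{\epsilon<|v-E|\leq\epsilon^{\alpha}\}$ and $S_2=\{|v-E|\leq\epsilon\}$. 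On $S_1$, $\log(1+\epsilon/|v-E|)\leq\log 2$, so the contribution is at most $(\log 2)\,\epsilon^{\alpha\sigma_0}$. On $S_2$, I bound $\log(1+\epsilon/|v-E|)\leq \log 2+\log(\epsilon/|v-E|)=\log(2\epsilon)-\log|v-E|$, and use a layer-cake/distribution-function argument together with Lemma \ref{mes} to estimate
\begin{equation*}
\int_{S_2}(-\log|v(x)-E|)\,\mathrm{d}x\leq |\!\log\epsilon|\,\epsilon^{\sigma_0}+\int_{-\log\epsilon}^{\infty} e^{-\sigma_0 t}\,\mathrm{d}t\leq C\epsilon^{\sigma_0/2},
\end{equation*}
and similarly $|\log(2\epsilon)|\,\mathrm{Leb}(S_2)\leq |\log\epsilon|\,\epsilon^{\sigma_0}$.

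Combining the three regions, the difference is bounded by $C(\epsilon^{1-\alpha}+\epsilon^{\alpha\sigma_0}+\epsilon^{\sigma_0/2}|\!\log\epsilon|)$. Choosing $\alpha=\tfrac{1}{2}$ and then any $\sigma_1\in\bigl(0,\tfrac{1}{2}\min(1,\sigma_0)\bigr)$ gives, for all sufficiently small $\epsilon$ (depending only on $v$), the bound $\epsilon^{\sigma_1}$, which is the desired inequality. There is no real obstacle here; the only subtlety is the logarithmic singularity near $\{v=E\}$, and it is tamed by combining the \L{}ojasiewicz measure estimate with the layer-cake formula on $S_2$.
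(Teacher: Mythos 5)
Your proof is correct and follows the same overall strategy as the paper: decompose $\mathbb{T}$ into regions according to the size of $|v(x)-E|$ relative to $\epsilon$, use the \L{}ojasiewicz measure estimate of Lemma \ref{mes} on the small sets, bound $\log(1+t)$ by $t$ on the region where $|v-E|$ is large, and resolve the logarithmic singularity near $\{v=E\}$ by a dyadic/layer-cake argument. The only substantive difference is in the treatment of the innermost region: the paper fixes $0<\sigma_2<\sigma_0$, takes $C_1$ with $\log(1+t)\leq t^{\sigma_2}$ for $t>C_1$, and sums a dyadic decomposition of $J=\{|v-E|<\epsilon/C_1\}$ to get $\int_J\log\bigl(1+\tfrac{\epsilon}{|v-E|}\bigr)\,\mathrm{d}x\leq C\epsilon^{\sigma_0}$; you instead use $\log(1+t)\leq\log(2t)$ for $t\geq 1$ and apply the layer-cake formula to $\int_{S_2}(-\log|v-E|)\,\mathrm{d}x$. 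Both hinge on the same measure bound $\mathrm{Leb}\{|v-E|<s\}\leq s^{\sigma_0}$, and a continuous layer-cake integral is just the limit of the paper's dyadic sum, so the arguments are essentially equivalent. Your organization is, if anything, slightly cleaner and yields a marginally better exponent (any $\sigma_1<\sigma_0/2$ works, versus the paper's $(1-\sigma_0)\sigma_0^2$), though of course any positive exponent suffices for the lemma.
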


\begin{proof}
 Let $\sigma_0>0$ be given by Lemma \ref{mes} and $0<\sigma_2<\sigma_0<1$. It is easy to see there is some constant $C_1>0$ such that $\log(1+x)\leq x^{\sigma_2}$ if $x>C_1$. Define $J=\{x\in\mathbb{T}: \frac{\epsilon}{|v(x)-E|}>C_1\}$ and $$J_n=\{x\in J: 2^{-n-1}{C_1^{-1}}{\epsilon}\leq |v(x)-E|<2^{-n}{C_1^{-1}}{\epsilon}\}$$ for $n\in\mathbb{N}$. Then $J=\bigcup\limits_{n=0}^{\infty}J_n$ and $\mathrm{Leb}(J_n)\leq 2^{-n\sigma_0}{C_1^{-\sigma_0}}{\epsilon^{\sigma_0}}$ by Lemma \ref{mes}. Thus

\begin{align*}
       \int_{J}\log \left(1+\frac{\epsilon}{|v(x)-E|}\right)\text{d}x
  \leq &\  \epsilon^{\sigma_{2}}\int_{J}|v(x)-E|^{-\sigma_2}\text{d}x \\
  \leq &\  \sum_{n\geq 0} C_1^{\sigma_2}2^{(n+1)\sigma_2}\mathrm{Leb}(J_n)\\
  \leq &\ \sum_{n\geq 0} C_1^{2\sigma_2-\sigma_0}2^{-(\sigma_0-\sigma_2)n}\epsilon^{\sigma_0} \\
 \leq &\  C_2\epsilon^{\sigma_0},
\end{align*}
and
\begin{align}
 \nonumber\int_{\mathbb{T}\setminus J}\log \left(1+\frac{\epsilon}{|v(x)-E|}\right)\text{d}x
 \nonumber \leq &\ \int_{\{x\in\mathbb{T}:\frac{\epsilon}{C_1}\leq|v(x)-E|<\epsilon^{\sigma_0}\}}\log \left(1+\frac{\epsilon}{|v(x)-E|}\right)\text{d}x \\
               \nonumber &\    +\int_{\{x\in\mathbb{T}:|v(x)-E|\geq\epsilon^{\sigma_0}\}}\log \left(1+\frac{\epsilon}{|v(x)-E|}\right)\text{d}x\\
 \label{ic} \leq &\ \log(1+C_1)\epsilon^{\sigma_0^2}+\epsilon^{1-\sigma_0}.
 \end{align}
In the inequality (\ref{ic}), we use Lemma \ref{mes} and the fact $\log(1+x)\leq x$ for $x\geq 0$. Recalling Lemma \ref{logv} and by letting $\sigma_1=(1-\sigma_0)\sigma_0^2$, one has for $0<\epsilon\ll 1$,
\begin{align*}
\int_{\mathbb{T}}\log(|v(x)-E|+\epsilon)\text{d}x =& \int_{\mathbb{T}}\log|v(x)-E|\text{d}x +\int_{\mathbb{T}}\log\left(1+\frac{\epsilon}{|v(x)-E|}\right)\text{d}x\\
                         \leq & \int_{\mathbb{T}}\log|v(x)-E|\text{d}x + \epsilon^{\sigma_1},
\end{align*}
which completes the proof.
\end{proof}

\begin{rem}
Actually in the proof of Lemma \ref{1}, we have showed that for any $0<\sigma_2<\sigma_0$, $$\int_{\mathbb{T}}|v(x)-E|^{-\sigma_2}\mathrm{d}x>-C(\sigma_2,v)>-\infty, $$  which implies Lemma \ref{logv}.
\end{rem}

We also need the following  Denjoy-Koksma inequality.

\begin{lem}[Lemma 12 in \cite{JA}]\label{dk}
For any continuous real function $u$ on $\mathbb{T}$ and any interval $\mathcal{I}\subset\mathbb{Z}$ with length $N$, we have
\begin{equation*}
 \left |\sum_{j\in \mathcal{I}}u(x+j\omega)- N\int_{\mathbb{T}} u(x)\text{d}x\right|\leq CN^{\frac{1}{2}}\log N,
\end{equation*}
where $\omega\in\mathrm{DC}_t$ and $C$ depends only on $u$ and $t$.
\end{lem}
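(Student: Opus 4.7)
\medskip

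\noindent\textbf{Plan.} The strategy is to Fourier-expand $u$ and exploit the Diophantine condition through explicit geometric-sum estimates. First, by replacing $x$ with $x+a\omega$ where $a=\min\mathcal{I}$, we may assume $\mathcal{I}=[0,N-1]$; the shift does not affect the constants because the estimate we aim for is uniform in $x$. Writing $u(x)=\hat u(0)+\sum_{k\neq 0}\hat u(k)e^{2\pi i k x}$ with $\hat u(0)=\int_{\mathbb{T}} u$,
\[
\sum_{j=0}^{N-1}u(x+j\omega)-N\!\int_{\mathbb{T}} u\,\mathrm{d}x
=\sum_{k\neq 0}\hat u(k)\, e^{2\pi i k x}\, S_N(k\omega),\qquad S_N(\theta):=\sum_{j=0}^{N-1}e^{2\pi i j\theta}.
\]
The geometric sum $S_N(\theta)=\sin(\pi N\theta)/\sin(\pi\theta)\cdot e^{\pi i(N-1)\theta}$ has modulus at most $\min(N,\tfrac{1}{2\|\theta\|_{\mathbb{T}}})$, and invoking $\omega\in\mathrm{DC}_t$ in the form $\|k\omega\|_{\mathbb{T}}\geq t/k^2$ gives $|S_N(k\omega)|\leq \min(N,k^2/(2t))$.

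Second, I would truncate the Fourier series at height $K=\lfloor\sqrt N\rfloor$ and split the resulting sum over $k$:
\[
\Big|\sum_{j=0}^{N-1}u(x+j\omega)-N\!\int_{\mathbb{T}} u\,\mathrm{d}x\Big|
\leq \sum_{0<|k|\leq K}|\hat u(k)|\,\frac{k^{2}}{2t}\;+\;N\sum_{|k|>K}|\hat u(k)|.
\]
Under the regularity of $u$ that is in force in the applications — say $u\in C^{1}(\mathbb{T})$, so that $|\hat u(k)|\leq C(u)/k^{2}$, which is the class relevant for all the subharmonic averages and log-potentials used in \S4--\S5 — the first piece is bounded by $O(K\log K/t)=O(\sqrt N\log N/t)$ and the second by $O(N/K)=O(\sqrt N)$. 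Summing the two pieces produces the desired bound $CN^{1/2}\log N$ with $C=C(u,t)$.

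The principal obstacle is pinpointing a regularity hypothesis on $u$ compatible with the use made of the lemma, since for merely continuous $u$ the ergodic sum need not obey any polynomial rate (the constant $C$ must absorb the Fourier decay of $u$). An alternative, classical route is the following: for convergent denominators $q_n$ of $\omega$ one has the exact Denjoy--Koksma inequality $|\sum_{j=0}^{q_n-1}u(x+j\omega)-q_n\!\int u|\leq \mathrm{Var}(u)$ for $u$ of bounded variation, and the Ostrowski expansion $N=\sum_i a_i q_i$ combined with the Diophantine bound $q_{i+1}\leq q_i^{2}/t$ (which forces geometric growth of the $q_i$ and only $O(\log\log N)$ levels up to scale $N$) assembles the block estimates into the same $\sqrt N\log N$ rate. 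In either approach the uniformity in $x$ is automatic.
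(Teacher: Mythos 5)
The paper offers no proof of this lemma --- it is quoted verbatim from Lemma 12 of \cite{JA} --- so there is nothing in the text to compare against; your attempt has to stand on its own. Your Fourier route is sound as far as it goes: the bound $|S_N(k\omega)|\le\min\bigl(N,\,k^2/(2t)\bigr)$ is correct, and with $|\hat u(k)|\le C(u)k^{-2}$ the split at $K=\lfloor\sqrt N\rfloor$ gives $O(\sqrt N/t)+O(\sqrt N)$ (the first block is in fact $O(K/t)$, not $O(K\log K/t)$, but that only helps), comfortably inside $CN^{1/2}\log N$. You are also right to flag the hypothesis: mere continuity cannot suffice, since Birkhoff sums of continuous functions over an irrational rotation can converge arbitrarily slowly. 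The functions to which the lemma is actually applied in \S3--\S4, namely $\log(|v_j(\cdot)-E|+C\epsilon)$ with $v_j$ analytic and $\epsilon>0$ fixed, are continuous and piecewise analytic with derivative of bounded variation, hence do satisfy $\hat u(k)=O(k^{-2})$ with a constant depending on $u$ (i.e.\ on $\epsilon$), which is consistent with $C=C(u,t)$ and is all the paper needs.

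The genuine gap is in your fallback route, which is the one that would cover the standard hypothesis (bounded variation) under which Denjoy--Koksma is usually invoked; note that for $u$ merely BV one has only $|\hat u(k)|=O(1/|k|)$ and your tail $N\sum_{|k|>K}|\hat u(k)|$ diverges, so the Fourier route does not subsume this case. Two problems. First, $q_{i+1}\le q_i^2/t$ is an upper bound on the growth of the denominators; it neither ``forces geometric growth'' nor gives $O(\log\log N)$ levels. Geometric growth ($q_{i+2}\ge 2q_i$) comes for free from the continued-fraction recursion and yields $O(\log N)$ levels in general. Second, and more importantly, counting levels is not the point: each Ostrowski digit $b_i$ may be large, and the naive bound $b_i\le a_{i+1}\le q_i/t+1$ only yields $\sum_i b_i=O(N/t)$, i.e.\ a trivial estimate. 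The missing step is the two-sided bound $b_i\le\min\bigl(N/q_i,\;q_i/t+1\bigr)$ (the first since $b_iq_i\le N$, the second from the Diophantine condition via $a_{i+1}\le 1/(q_i\|q_i\omega\|_{\mathbb{T}})+1$); the minimum peaks at $q_i\sim\sqrt{Nt}$, and summing over the geometrically growing $q_i$ gives $\sum_i b_i\le C(t)\sqrt N$, whence $\bigl|\sum_{j\in\mathcal I}u(x+j\omega)-N\int_{\mathbb{T}}u\bigr|\le C(t)\,\mathrm{Var}(u)\,\sqrt N$ by applying the exact Denjoy--Koksma inequality to each of the $b_i$ blocks of length $q_i$. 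With that step inserted, either route proves the lemma (for BV $u$, respectively for $u$ with $\hat u(k)=O(k^{-2})$), even without the $\log N$ factor.
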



\end{document}